\newcommand{\nc}{\newcommand}
\nc{\kk}{{\mathsf{k}}}
\nc{\CC}{{\mathbb{C}}}
\nc{\PP}{{\mathbb{P}}}
\nc{\QQ}{{\mathbb{Q}}}
\nc{\RR}{{\mathbb{R}}}
\nc{\ZZ}{{\mathbb{Z}}}
\nc{\CA}{{\mathcal{A}}}
\nc{\CB}{{\mathcal{B}}}
\nc{\C}{{\mathcal{C}}}
\nc{\D}{{\mathcal{D}}}
\nc{\CE}{{\mathcal{E}}}
\nc{\CF}{{\mathcal{F}}}
\nc{\CG}{{\mathcal{G}}}
\nc{\CH}{{\mathcal{H}}}
\nc{\CL}{{\mathcal{L}}}
\nc{\CM}{{\mathcal{M}}}
\nc{\CN}{{\mathcal{N}}}
\nc{\CO}{{\mathcal{O}}}
\nc{\CQ}{{\mathcal{Q}}}
\nc{\CR}{{\mathcal{R}}}
\nc{\CS}{{\mathcal{S}}}
\nc{\CT}{{\mathcal{T}}}
\nc{\CU}{{\mathcal{U}}}
\nc{\CV}{{\mathcal{V}}}
\nc{\CW}{{\mathcal{W}}}
\nc{\CX}{{\mathcal{X}}}
\nc{\CY}{{\mathcal{Y}}}
\nc{\eps}{\varepsilon}
\nc{\lotimes}{\mathbin{\mathop{\otimes}\limits^{\mathbb{L}}}}
\nc{\CEnd}{\mathop{\mathcal{E}\mathit{nd}}\nolimits}
\nc{\CExt}{\mathop{\mathcal{E}\mathit{xt}}\nolimits}
\nc{\CHom}{\mathop{\mathcal{H}\mathit{om}}\nolimits}
\nc{\RGamma}{\mathop{{\mathsf{R}}\Gamma}\nolimits}
\nc{\RHom}{\mathop{\mathsf{RHom}}\nolimits}
\nc{\RCHom}{\mathop{\mathsf{R}\mathcal{H}\mathit{om}}\nolimits}
\nc{\RG}{\mathop{\mathsf{R\Gamma}}\nolimits}
\nc{\Hom}{\mathop{\mathsf{Hom}}\nolimits}
\nc{\Ext}{\mathop{\mathsf{Ext}}\nolimits}
\nc{\End}{\mathop{\mathsf{End}}\nolimits}
\nc{\Tor}{\mathop{\mathsf{Tor}}\nolimits}
\nc{\Tordim}{\mathop{\mathsf{Tor}\text{\rm-}\mathsf{dim}}\nolimits}
\nc{\Hilb}{\mathop{\mathsf{Hilb}}\nolimits}
\nc{\Spec}{\mathop{\mathsf{Spec}}\nolimits}
\nc{\Pic}{\mathop{\mathsf{Pic}}\nolimits}
\nc{\Tr}{\mathop{\mathsf{Tr}}\nolimits}
\nc{\Cone}{\mathop{\mathsf{Cone}}\nolimits}
\nc{\Ker}{\mathop{\mathsf{Ker}}\nolimits}
\nc{\Coker}{\mathop{\mathsf{Coker}}\nolimits}
\nc{\codim}{\mathop{\mathsf{codim}}\nolimits}
\nc{\sing}{{\mathsf{sing}}}
\nc{\supp}{\mathop{\mathsf{supp}}}
\nc{\vol}{\mathop{\mathsf{vol}}\nolimits}
\nc{\ch}{\mathop{\mathsf{ch}}\nolimits}
\nc{\perf}{{\mathsf{perf}}}
\nc{\rank}{\mathop{\mathsf{rank}}}
\nc{\Pf}{{\mathsf{Pf}}}
\nc{\Gr}{{\mathsf{Gr}}}
\nc{\OGr}{{\mathsf{OGr}}}
\nc{\LGr}{{\mathsf{LGr}}}
\nc{\IGr}{{\mathsf{IGr}}}
\nc{\OF}{{\mathsf{OF}}}
\nc{\Fl}{{\mathsf{Fl}}}
\nc{\Bl}{{\mathsf{Bl}}}
\nc{\GL}{{\mathsf{GL}}}
\nc{\SO}{{\mathsf{SO}}}
\nc{\PGL}{{\mathsf{PGL}}}
\nc{\SL}{{\mathsf{SL}}}
\nc{\SP}{{\mathsf{Sp}}}
\nc{\Spin}{{\mathsf{Spin}}}
\nc{\Tot}{{\mathsf{Tot}}}
\nc{\ev}{{\mathsf{ev}}}
\nc{\od}{{\mathsf{odd}}}
\nc{\coev}{{\mathsf{coev}}}
\nc{\id}{{\mathsf{id}}}
\nc{\opp}{{\mathsf{opp}}}
\nc{\tdim}{\mathop{\Tor\dim}}
\nc{\ad}{{\mathop{\mathsf ad}}}
\nc{\sg}{{\mathop{\mathsf sg}}}
\nc{\hf}{{\mathop{\mathsf hf}}}
\nc{\gr}{{\mathop{\mathsf gr}}}
\nc{\qgr}{{\mathop{\mathsf qgr}}}
\nc{\Coh}{{\mathop{\mathsf{Coh}}}}
\nc{\Rep}{{\mathop{\mathsf{Rep}}}}
\nc{\fsl}{{\mathfrak{sl}}}
\nc{\fso}{{\mathfrak{so}}}
\nc{\fgl}{{\mathfrak{gl}}}
\theoremstyle{plain}
\newtheorem*{theorem*}{Theorem}
\newtheorem{theorem}{Theorem}[section]
\newtheorem{lemma}[theorem]{Lemma}
\newtheorem{proposition}[theorem]{Proposition}
\newtheorem{corollary}[theorem]{Corollary}
\theoremstyle{definition}
\newtheorem{definition}[theorem]{Definition}
\theoremstyle{remark}
\newtheorem{remark}[theorem]{Remark}
\newtheorem{example}[theorem]{Example}
\nc{\Irr}{\mathop{\mathsf{Irr}}\nolimits}
\nc{\sG}{\mathsf{G}}
\nc{\sL}{\mathsf{L}}
\nc{\sP}{\mathsf{P}}
\nc{\sU}{\mathsf{U}}
\nc{\sT}{\mathsf{T}}
\nc{\sB}{\mathsf{B}}
\nc{\thalf}{\tfrac{1}{2}}
\title{Irreducible Ulrich bundles on isotropic Grassmannians}
\author{Anton Fonarev}
\address{\sloppy
    \parbox{0.9\textwidth}{
        Algebraic Geometry Section, Steklov Mathematical Institute,
        8 Gubkin str., Moscow 119991 Russia
        \hfill
}\bigskip}
\email{avfonarev@mi.ras.ru}
\date{}
\thanks{This work is supported by the RSF under a grant 14-50-00005.}
\begin{document}

\begin{abstract}
    We classify irreducible equivariant Ulrich vector bundles on isotropic Grassmannians.
\end{abstract}

\maketitle

\section{Introduction}
\label{sec:intro}

Ulrich bundles were introduced in~\cite{ulrichEis} in order to study Chow
forms. The notion of an Ulrich module appeared much earlier in commutative
algebra. Ulrich himself gave a certain sharp upper bound on the minimal number
of generators for a maximal Cohen--Macaulay module module over a Cohen--Macaulay
homogeneous ring. Ulrich modules are precisely those for which the upper
bound is attained (see~\cite{ulrich}). There are several equivalent definitions of Ulrich bundles.
Though we are going to use a less enlightning cohomological one, it might
be motivating to formulate the most geometric and, to our taste, the easiest one.
Given a $d$-dimensional projective variety $X\subset \PP^N$, 
an \emph{Ulrich bundle} on $X$ is a vector bundle $E$ such that $\pi_* E$
is a trivial bundle for a general linear projection $\pi:X\to \PP^d$.
Further details may be found in~\cite{ulrichEis}.

It was asked in~\cite{ulrichEis} whether every projective variety admits
an Ulrich bundle. So far the answer is known in few cases which include
hypersurfaces and complete intersections~\cite{ulrichCI}, del~Pezzo
surfaces~\cite{ulrichEis}, and abelian surfaces~\cite{ulrichAbS}.

While it is not clear whether the answer
to the general question is positive, in the case of rational homogeneous
varieties one has a large test class of vector bundles, namely equivariant ones.
In~\cite{ulrichGr} the authors fully classified irreducible equivariant
Ulrich vector bundles on Grassmannians over an algebraically closed field
of characteristic zero. These results were further improved in~\cite{ulrichFl},
where most of partial flag varieties were treated. In the current
paper we move in an orthogonal (or symplectic) direction and classify
irreducible equivariant Ulrich bundles on isotropic Grassmannians, that is
varieties of the form $\sG/\sP$, where $\sG$ is a classical group of type
$B_n$, $C_n$, or $D_n$, and $\sP$ is a maximal parabolic subgroup.

The following meta theorem is the main result of the present paper.
\begin{theorem*}
    The only isotropic Grassmannians admitting an equivariant irreducible
    Ulrich vector bundle are the symplectic Grassmannians of planes
    $\IGr(2,2n)$ for $n\geq 2$, odd and even-dimensional quadrics $Q^d$, 
    orthogonal Grassmannians of planes $\OGr(2,m)$ for $m\geq 4$,
    even orthogonal Grassmannians of $3$-spaces $\OGr(3, 4q+6)$ for $q\geq 0$,
    and $\OGr(4,8)$. In each case the corresponding bundles are classified.
\end{theorem*}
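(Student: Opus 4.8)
The plan is to use the cohomological characterization of Ulrich bundles: writing $d = \dim X$ and $\CO_X(1)$ for the very ample generator defining the embedding (taken in the appropriate normalization for the minimal homogeneous embedding), a vector bundle $E$ is Ulrich if and only if $H^\bullet(X, E(-p)) = 0$ for every $p$ with $1 \le p \le d$. An irreducible equivariant bundle $E = E_\lambda$ on $X = \sG/\sP$ is induced from an irreducible representation of the Levi $\sL$ with highest weight $\lambda$, and $\CO_X(1)$ corresponds to the fundamental weight $\omega_k$ at the node defining the maximal parabolic $\sP$; hence $E_\lambda(-p) = E_{\lambda - p\,\omega_k}$. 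By the Borel--Weil--Bott theorem the cohomology $H^\bullet(X, E_\nu)$ is nonzero precisely when $\nu + \rho$ is regular. Setting $\mu = \lambda + \rho$, the Ulrich condition becomes the purely combinatorial requirement that the weight $\mu - p\,\omega_k$ be singular (that is, lie on a wall of the Weyl arrangement) for each $p = 1, \dots, d$.

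The key observation is a coincidence of cardinalities. A weight $v$ is singular exactly when $\langle v, \alpha^\vee\rangle = 0$ for some positive root $\alpha$. Along the line $v(t) = \mu - t\,\omega_k$ one has $\langle v(t), \alpha^\vee\rangle = \langle \mu, \alpha^\vee\rangle - t\,\langle \omega_k, \alpha^\vee\rangle$; the roots $\alpha$ lying in $\sL$ satisfy $\langle \omega_k, \alpha^\vee\rangle = 0$ and, since $\mu$ is Levi-regular, never vanish on the line, whereas each of the remaining positive roots --- those in the nilradical of $\sP$ --- satisfies $\langle \omega_k, \alpha^\vee\rangle > 0$ and so produces exactly one singular parameter $t_\alpha = \langle \mu, \alpha^\vee\rangle / \langle \omega_k, \alpha^\vee\rangle$. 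But the number of positive roots in the nilradical is exactly $\dim X = d$. Thus there are at most $d$ singular values on the line, and the required inclusion $\{1, \dots, d\} \subseteq \{t_\alpha\}$ can hold only if the $t_\alpha$ are pairwise distinct and $\{t_\alpha : \alpha \in \Phi^+ \setminus \Phi^+_{\sL}\} = \{1, 2, \dots, d\}$ as sets. This rigid ``graceful-labelling'' condition --- that the $d$ ratios $\langle \mu, \alpha^\vee\rangle / \langle \omega_k, \alpha^\vee\rangle$ realize every integer from $1$ to $d$ exactly once --- is the entire content of the problem, and it must be solved over the cone of admissible integral $\mu$.

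It remains to carry out this analysis in each of the types $B_n$, $C_n$, $D_n$. First I would fix coordinates $e_1, \dots, e_n$, write $\omega_k = e_1 + \dots + e_k$ and $\mu = (\mu_1, \dots, \mu_n)$, and list the nilradical roots together with the values $\langle \omega_k, \alpha^\vee\rangle$ (equal to $1$ or $2$), which in the non-simply-laced cases distinguishes long and short roots and forces parity constraints on the numerators $\langle\mu,\alpha^\vee\rangle$. The positive cases are then produced by exhibiting an explicit $\mu$ whose ratios form the set $\{1, \dots, d\}$ and translating back to a dominant $\lambda$: for the planes $\IGr(2, 2n)$ and $\OGr(2, m)$ and the quadrics $Q^d$ the nilradical is small and one writes down the weights directly, while $\OGr(3, 4q+6)$ and $\OGr(4,8)$ require a more delicate choice. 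The main obstacle is the negative direction: showing that for all other $(k, n)$ no admissible $\mu$ exists. Here the symmetry of the root system forces the ratios $t_\alpha$ to occur in correlated families (the roots $e_i \pm e_j$, together with $2e_i$ or $e_i$, share the coordinates $\mu_i, \mu_j$), so that prescribing the small values $1, 2, \dots$ over-determines the coordinates and collides with the requirement of also covering the large values up to $d$. Making this collision sharp --- via the linear and parity constraints obtained by summing the defining relations $\langle \mu, \alpha^\vee\rangle = t_\alpha\,\langle \omega_k, \alpha^\vee\rangle$ over suitable subfamilies of roots, together with a count of how many ratios can land in a prescribed residue class --- is where the real work lies, and is what pins the classification down to exactly the stated list.
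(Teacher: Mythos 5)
Your setup is correct and coincides with the paper's: the observation that only the $d=\dim X$ positive roots outside the Levi can vanish along the line $\lambda+\rho-t\omega_k$, each exactly once, is Lemma~\ref{lm:maxt}; the resulting criterion that $\CU^\lambda$ is Ulrich if and only if these $d$ ratios are pairwise distinct integers filling $\{1,\dots,d\}$ exactly is Lemma~\ref{lm:ulr}; and the coordinate form of the ratios you describe is Proposition~\ref{prop:igr:irr} and its analogues in types $B_n$ and $D_n$. (Your starting point, the characterization $H^\bullet(X,E(-p))=0$ for $1\le p\le d$, is the standard equivalent of the paper's definition, so no issue there.)

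The gap is that the statement being proved is a classification, and your write-up stops exactly where the classification starts. The positive cases are not exhibited: you would need the weights on $\IGr(2,2n)$ indexed by positive divisors $p$ of $n-1$ with $(n-1)/p$ odd (Proposition~\ref{prop:igr2}), and the weights $\lambda^\pm$ of Proposition~\ref{prop:d:3} on $\OGr(3,4q+6)$ --- neither of which is a matter of ``writing down the weights directly,'' since already for $\IGr(2,2n)$ the answer depends on the arithmetic of $n-1$. The negative direction, which is the bulk of the theorem, is explicitly deferred (``where the real work lies''), and the mechanism you propose for it --- summing the relations $\langle\mu,\alpha^\vee\rangle=t_\alpha\langle\omega_k,\alpha^\vee\rangle$ over subfamilies of roots together with residue-class counting --- is not what closes these cases in the paper, and there is no evidence it suffices. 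The paper's arguments exploit a sharper consequence of the covering condition: because the values must fill $\{1,\dots,d\}$ with no gaps, maximal chains of \emph{consecutive} values are forced, and comparing their endpoints produces collisions such as $\alpha_{2,k-1}=\alpha_{1k}$, contradicting distinctness (proofs of Propositions~\ref{prop:igr} and~\ref{prop:d}); on top of this one needs the parity obstruction of Lemma~\ref{lm:igr_dim}, an induction on $n-k$ with pigeonhole steps for the rank-two and rank-three cases, a half-integral shift reducing type $B_n$ to type $C_n$, and, for the maximal Grassmannians, the rigidity statement of Proposition~\ref{prop:lg_tij} forcing $\alpha_{ii}=4i-5$ and hence the equation $n(n+1)/2=4n-5$. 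Until you supply arguments at this level for every pair $(k,n)$, what you have is the (correct) reduction to a combinatorial problem, not a proof of the theorem.
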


The paper is organized as follows. In Section~\ref{sec:prel} we collect
some preliminary definitions and provide a simple criterion for an equivariant
irreducible vector bundle to be Ulrich. In Section~\ref{sec:igr} we treat all
isotropic Grassmannians in type $C_n$ but maximal (Lagrangian) ones.
Similarly, in sections~\ref{sec:ogr} and~\ref{sec:dn} we deal with all but
maximal Grassmannians in types $B_n$ and $D_n$ respectively.
Finally, Section~\ref{sec:max} is devoted to maximal Grassmannians,
both symplectic and orthogonal.

The author is gratful to the anonymous reviewer for careful reading of
the original manuscript.

\section{Preliminaries}
\label{sec:prel}

Let $\sG$ be a simple algebraic group of type $B_n$, $C_n$, or $D_n$
over an algebraically closed field of characteristic zero.
We fix a maximal torus $\sT\subset\sG$ along with a Borel subgroup $\sB\supset\sT$.
Let $P_\sG$ and $W$ denote the weight lattice and the Weil group
of $\sG$ respectively. In the following all the roots and weights will be expressed
in terms of the standard orthonormal basis
$\langle e_1,e_2,\ldots, e_n \rangle$ in a fixed vector space $\RR^n$.
Denote by $\Phi\supset\Phi^+\supset\Delta$ the root system of $\sG$,
the set of positive roots, and the set of simple roots respectively.
We use the standard numbering of the simple roots $\Delta=\{\eps_1,\ldots,\eps_n\}$.

Our goal is to study vector bundles on
varieties of the form $X=\sG/\sP$, where $\sP=\sP_k$ is the
maximal parabolic subgroup associated with the set $\Delta\setminus \{\eps_k\}$.
We call such varieties \emph{isotropic Grassmannians}.
In particular, maximal isotropic Grassmannians correspond to $\Delta\setminus \{\eps_n\}$.

It is well known that the category of $\sG$-equivariant vector bundles on
$X$ is equivalent to the category of representations of $\sP$:
\[
    \Coh^\sG(\sG/\sP)\simeq \Rep-\sP,
\]
see~\cite{bonKapHomog}. Moreover, this is an equivalence of tensor abelian categories.

The group $\sP$ is not reductive, however, we are interested only in irreducible
equivariant vector bundles on $X$. Those correspond to irreducible representations
of $\sP$, which in turn are all induced from the reductive Levi quotient group $\sL=\sP/\sU$
(here $\sU$ denotes the unipotent radical of $\sP$).

The weight lattice $P_\sL$ is canonically isomorphic to $P_\sG$.
Let $P^+_\sL\subset P_\sL$ denote the cone of $\sL$-dominant weights.
It is generated by $\omega_1,\ldots,\omega_n$ and $-\omega_k$.
For a given $\lambda\in P^+_\sL$ we denote by
$\CU^\lambda$ the corresponding irreducible equivariant vector bundle on $X$.
In particular, $\CU^{\omega_k}$ is the ample generator of $\Pic X\simeq \ZZ$.

The Borel--Bott--Weil theorem is the tool we use to compute cohomology of irreducible
vector bundles on Grassmannians (see, e.g.,~\cite{weyman}).
Let $\rho$ denote the half-sum of positive roots of $\sG$, which also equals
the sum of fundamental weights.
Recall that a weight is called \emph{singular} if it lies on a wall
of a Weyl chamber, i.e. is fixed by a nontrivial element of $W_\sG$. Equivalently,
a weight is singular if and only if it is orthogonal to some (positive) root of $\sG$.

\begin{theorem}[Borel--Bott--Weil]
    \label{thm:bbw}
    Let $\lambda\in P^+_\sL$. If $\lambda+\rho$ is singular, then
    \[
        H^\bullet(X,\,\CU^\lambda) = 0.
    \]
    Otherwise, $\lambda+\rho$ lies in the interior of some Weyl chamber,
    and there exists a unique element $w\in W$ such that
    $w(\lambda+\rho)\in P^+_\sG$. Let $l$ denote the length of $w$.
    Then the only nontrivial cohomology 
    \[
        H^l(X,\,\CU^\lambda) = V^{w(\lambda+\rho)-\rho}
    \]
    is an irreducible representation of $\sG$ of highest weight
    $w(\lambda+\rho)-\rho$.
\end{theorem}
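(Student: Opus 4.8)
The plan is to reduce the statement to the classical Borel--Weil theorem on the full flag variety and then transport cohomology across the walls of the Weyl chambers by an induction using minimal parabolic subgroups.

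First I would pass from the partial flag $X=\sG/\sP$ to the full flag $Y=\sG/\sB$ along the natural projection $\pi\colon Y\to X$, whose fibres are isomorphic to the flag variety $\sL/\sB_\sL$ of the Levi quotient, where $\sB_\sL$ is a Borel subgroup of $\sL$. Let $\CL_\lambda$ be the line bundle on $Y$ attached to the character $\lambda$ of $\sB$. Since $\lambda$ is $\sL$-dominant, applying Borel--Weil on each fibre gives $R^{>0}\pi_*\CL_\lambda=0$ and $\pi_*\CL_\lambda=\CU^\lambda$. The Leray spectral sequence then yields $H^\bullet(X,\CU^\lambda)\cong H^\bullet(Y,\CL_\lambda)$, so it suffices to compute the cohomology of line bundles on $Y$.

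On $Y$ I would induct on the distance of $\lambda+\rho$ from the dominant chamber. The base case is $\lambda+\rho$ dominant, where Borel--Weil gives $H^0(Y,\CL_\lambda)=V^\lambda$ and vanishing in positive degree, matching $w=\id$ and $l=0$. For the inductive step, assume $\lambda+\rho$ is not dominant and pick a simple root $\alpha$ with $\langle\lambda+\rho,\alpha^\vee\rangle\leq 0$; consider the $\PP^1$-fibration $p_\alpha\colon Y\to\sG/\sP_\alpha$ associated with the minimal parabolic $\sP_\alpha\supset\sB$. Along each fibre $\CL_\lambda$ restricts to $\CO_{\PP^1}(d)$ with $d=\langle\lambda,\alpha^\vee\rangle=\langle\lambda+\rho,\alpha^\vee\rangle-1\leq -1$. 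If $d=-1$, i.e. $\lambda+\rho$ lies on the wall orthogonal to $\alpha$ and is therefore singular, then all fibrewise cohomology vanishes and $H^\bullet(Y,\CL_\lambda)=0$. If $d\leq -2$, then relative Serre duality along $p_\alpha$ identifies $R^1 p_{\alpha*}\CL_\lambda$ with $p_{\alpha*}\CL_{s_\alpha\cdot\lambda}$, where $s_\alpha\cdot\lambda=s_\alpha(\lambda+\rho)-\rho$, and two applications of the Leray spectral sequence give $H^i(Y,\CL_\lambda)\cong H^{i-1}(Y,\CL_{s_\alpha\cdot\lambda})$. Since $s_\alpha$ moves $\lambda+\rho$ strictly closer to the dominant chamber, the iteration terminates: in the singular case one eventually meets a wall and obtains total vanishing, while in the regular case after exactly $l=\ell(w)$ reflections $\lambda+\rho$ reaches $w(\lambda+\rho)$, the cohomological degree has been raised by $l$, and the base case produces $H^l(Y,\CL_\lambda)=V^{w(\lambda+\rho)-\rho}$.

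The main obstacle, and the step I would treat most carefully, is the fibrewise analysis over the minimal-parabolic $\PP^1$-fibrations: one must check that $\CL_\lambda$ restricts to $\CO_{\PP^1}(\langle\lambda,\alpha^\vee\rangle)$ on every fibre, identify $R^1 p_{\alpha*}\CL_\lambda$ with $p_{\alpha*}\CL_{s_\alpha\cdot\lambda}$ as a $\sG$-equivariant sheaf (so that the cohomology groups genuinely assemble into the asserted irreducible $\sG$-representation), and verify that the degenerate value $d=-1$ corresponds exactly to $\lambda+\rho$ being orthogonal to $\alpha$. Once these equivariant identifications are in place, the regular/singular dichotomy and the matching of the degree shift with $l=\ell(w)$ follow formally from the action of $W$ on the chambers.
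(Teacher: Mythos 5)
The paper offers no proof for you to be compared against: Theorem~\ref{thm:bbw} is the classical Borel--Bott--Weil theorem, and the paper simply quotes it with a reference to~\cite{weyman}. Judged on its own merits, your argument is correct --- it is the standard Demazure-style proof. Both of your reductions are the classical ones: Leray along $\pi\colon\sG/\sB\to\sG/\sP$, where $\sL$-dominance of $\lambda$ and fibrewise Borel--Weil give $\pi_*\CL_\lambda=\CU^\lambda$ and $R^{>0}\pi_*\CL_\lambda=0$, followed by the wall-crossing isomorphism $H^i(\sG/\sB,\CL_\lambda)\cong H^{i-1}(\sG/\sB,\CL_{s_\alpha\cdot\lambda})$ along the minimal-parabolic $\PP^1$-fibrations. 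The step you rightly flag as the crux, the $\sG$-equivariant identification $R^1p_{\alpha*}\CL_\lambda\cong p_{\alpha*}\CL_{s_\alpha\cdot\lambda}$ when $d=\langle\lambda,\alpha^\vee\rangle\leq-2$, does go through: relative Serre duality (the relative canonical bundle of $p_\alpha$ is $\CL_{-\alpha}$) gives $R^1p_{\alpha*}\CL_\lambda\cong\left(p_{\alpha*}\CL_{-\lambda-\alpha}\right)^\vee$, and dualizing the fibrewise irreducible representation of the Levi $\sL_\alpha$ of $\sP_\alpha$ replaces its highest weight $-\lambda-\alpha$ by $-s_\alpha(-\lambda-\alpha)=\lambda-\langle\lambda+\rho,\alpha^\vee\rangle\,\alpha=s_\alpha\cdot\lambda$, because the longest element of the Weyl group of $\sL_\alpha$ is $s_\alpha$ itself; the central twist also matches, since $\alpha$ vanishes on the centre of $\sL_\alpha$. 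Two details worth making explicit in a full write-up: in the regular case you must choose $\alpha$ with the strict inequality $\langle\lambda+\rho,\alpha^\vee\rangle<0$, and each such reflection decreases by one the number of positive roots on which $\lambda+\rho$ is negative --- this simultaneously guarantees termination and identifies the total degree shift with $\ell(w)$; in the singular case, termination at a wall follows because a weakly dominant singular weight is automatically orthogonal to some simple root. What your route buys is a self-contained proof using only Borel--Weil, cohomology of line bundles on $\PP^1$, and the Leray spectral sequence; for the purposes of this paper, the citation is of course the economical choice.
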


Now that we have all necessary information about irreducible equivariant
vector bundles on isotropic Grassmannians, we can turn to Ulrich bundles.
The latter were introduced in~\cite{ulrichEis} where the authors study
Chow forms. We are going to use the original cohomological characterization.

\begin{definition}
    Let $X\subset \PP^N$ be a projective variety of dimension $d$.
    A vector bundle $\CE$ on $X$ is called \emph{Ulrich}
    if $H^i(X, \CE(t))=0$ for $0 < i < d$ and all $t$, while
    $H^0(X, \CE(t))=0$ for $t < 0$, and $H^d(X, \CE(t))=0$ for $t\geq -d$.
\end{definition}

The goal of the present paper is to classify all irreducible equivariant
Ulrich bundles on $X=\sG/\sP_{\eps_k}$ under the natural embedding
given by $\CU^{\omega_k}$. Let $d=\dim X$.
We are now going to reduce the problem to a~representation-theoretic one.
For a given $\sL$-dominant weight $\lambda\in P^+_\sL$ define
\[
    \Irr(\lambda) = \left\{t\in\ZZ \mid \lambda+\rho-t\omega_k\ \text{is singular}\right\}.
\]

\begin{lemma}
    \label{lm:maxt}
    The set $\Irr(\lambda)$ contains at most $d$ elements.
\end{lemma}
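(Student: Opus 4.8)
The plan is to reduce the count to the number of positive roots in the nilradical of $\sP$. By the characterization of singularity recalled above, a weight is singular exactly when it is orthogonal to some positive root, so
\[
    t\in\Irr(\lambda)\iff \langle \lambda+\rho-t\omega_k,\,\alpha\rangle=0\ \text{for some}\ \alpha\in\Phi^+.
\]
For a fixed $\alpha$ the left-hand side is the affine function $\langle\lambda+\rho,\alpha\rangle-t\,\langle\omega_k,\alpha\rangle$ of $t$, so the strategy is to show that (i) the roots $\alpha$ lying in the Levi contribute no solutions whatsoever, while (ii) each of the remaining positive roots contributes at most one value of $t$, there being exactly $d$ of them.

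First I would split $\Phi^+$ according to the coefficient of the simple root $\eps_k$. Writing $\alpha=\sum_j m_j\eps_j$, the standard computation in the orthonormal basis (using $\langle\omega_k,\eps_i\rangle=\delta_{ki}\,\langle\eps_k,\eps_k\rangle/2$) gives $\langle\omega_k,\alpha\rangle=m_k\,\langle\eps_k,\eps_k\rangle/2$, so $\langle\omega_k,\alpha\rangle=0$ precisely when $m_k=0$, that is, when $\alpha$ is a root of $\sL$. For such a root the affine function above degenerates to the constant $\langle\lambda+\rho,\alpha\rangle$. Since $\lambda\in P^+_\sL$ is $\sL$-dominant and $\rho$ is strictly dominant, $\langle\lambda+\rho,\alpha\rangle>0$ for every positive root $\alpha$ of $\sL$; hence these roots never make $\lambda+\rho-t\omega_k$ singular and may be discarded.

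It remains to treat the positive roots $\alpha$ with $m_k>0$, i.e. the roots lying in the nilradical of $\sP$. For each such $\alpha$ one has $\langle\omega_k,\alpha\rangle>0$, so the affine function $\langle\lambda+\rho,\alpha\rangle-t\,\langle\omega_k,\alpha\rangle$ is genuinely nonconstant and vanishes for at most one value of $t$. Thus $\Irr(\lambda)$ is contained in the union, over the positive roots of the nilradical, of these singleton solution sets, giving
\[
    |\Irr(\lambda)|\le \#\{\alpha\in\Phi^+ : m_k>0\}=\dim\sG/\sP=d.
\]
I do not expect a genuine obstacle here: the whole content is the clean dichotomy between Levi roots (excluded by the dominance of $\lambda+\rho$) and nilradical roots (each producing a single value of $t$), together with the identification of the number of positive roots of the nilradical with $d=\dim X$. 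The one point deserving a moment's care is the strict positivity $\langle\lambda+\rho,\alpha\rangle>0$ on the Levi roots, which is exactly where the $\sL$-dominance hypothesis on $\lambda$ is used.
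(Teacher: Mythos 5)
Your proof is correct and follows essentially the same route as the paper: both split $\Phi^+$ into Levi roots (where $(\omega_k,\alpha)=0$ and $\sL$-dominance of $\lambda$ plus strict dominance of $\rho$ rules out any solution in $t$) and nilradical roots (each giving at most one, a priori rational, value of $t$), and both conclude by identifying the number of nilradical roots with $d=\dim X$. There is nothing to add.
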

\begin{proof}
    The weight $\lambda=\sum x_i\omega_i$ is $L$-dominant if and only
    if $x_i\geq 0$ for all $i\neq k$. Thus, for $\lambda\in P^+_\sL$
    the coefficients $y_i$ in the decomposition
    $\lambda+\rho=\sum y_i\omega_i$ are positive for all $i\neq k$.
    We are interested in all $t\in\ZZ$ for which
    $\lambda+\rho-t\omega_k$ is orthogonal to a positive root of $\sG$.
    Put 
    \[
        \Phi'=\{\alpha\in\Phi^+\mid \alpha=\sum_{i\neq k} p_i\eps_i\}.
    \]
    For any $\alpha\in\Phi'$ one has $(\alpha, \omega_k)=0$, thus
    $(\alpha, \lambda+\rho-t\omega_k)=\sum_{i\neq k}y_i(\alpha, \omega_i)>0$ for any $t$.
    Meanwhile, for any $\alpha\in\Phi^+\setminus\Phi'$ one has
    $(\alpha, \omega_k)>0$. Thus, there exists a unique \emph{rational} $t$
    such that $(\alpha, \lambda+\rho-t\omega_k)=0$. We conclude that $\Irr(\lambda)$
    contains at most $|\Phi^+\setminus \Phi'|$ elements. It is well known that
    $d=|\Phi^+\setminus \Phi'|$, which finishes the proof.
\end{proof}

We can now formulate a simple criterion for an irreducible equivariant
vector bundle on an isotropic Grassmannian to be Ulrich.
\begin{lemma}
    \label{lm:ulr}
    An irreducible equivariant vector bundle $\CU^\lambda$ on $X$ is Ulrich
    if and only if
    \[
        \Irr(\lambda) = \{1,2,\ldots,d\}.
    \]
\end{lemma}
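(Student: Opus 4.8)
The plan is to translate both the Ulrich conditions and the set $\Irr(\lambda)$ into a single combinatorial statement about the cohomological degree produced by Borel--Bott--Weil as we twist $\CU^\lambda$, and then read off the equivalence. Since $\CU^{\omega_k}$ is the ample generator, for every $s\in\ZZ$ the twist is $\CU^\lambda(s)=\CU^{\lambda+s\omega_k}$, and by Theorem~\ref{thm:bbw} its cohomology is governed by the weight $\lambda+\rho+s\omega_k$. In particular $\CU^\lambda(s)$ is acyclic precisely when this weight is singular, which for integral $s$ happens exactly when $-s\in\Irr(\lambda)$; otherwise the cohomology is concentrated in the single degree $L(s)$ equal to the length of the Weyl element straightening $\lambda+\rho+s\omega_k$.

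First I would make $L(s)$ explicit. Recall from the proof of Lemma~\ref{lm:maxt} that for $\alpha\in\Phi'$ one has $(\alpha,\lambda+\rho+s\omega_k)=(\alpha,\lambda+\rho)>0$ independently of $s$, while for $\alpha\in\Phi^+\setminus\Phi'$ one has $(\alpha,\omega_k)>0$ and hence a unique rational $t_\alpha=(\alpha,\lambda+\rho)/(\alpha,\omega_k)$ with $(\alpha,\lambda+\rho+s\omega_k)=(\alpha,\omega_k)(t_\alpha+s)$. Since the length of the straightening element equals the number of positive roots on which the weight is negative, this gives
\[
    L(s) = \#\{\alpha\in\Phi^+\setminus\Phi'\mid s<-t_\alpha\},
\]
a non-increasing step function of $s$ that equals $0$ for $s\gg 0$, equals $d=|\Phi^+\setminus\Phi'|$ for $s\ll 0$, and jumps only at the $d$ critical values $-t_\alpha$.

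Because a singular twist is acyclic, the vanishing conditions impose nothing there, and the Ulrich property reduces to the following requirements on the regular integers $s$: (a) $L(s)\in\{0,d\}$ for every regular $s$; (b) $L(s)\neq 0$ for regular $s<0$; and (c) $L(s)\neq d$ for regular $s\geq -d$. For necessity I would then observe that for each $s\in\{-d,-d+1,\ldots,-1\}$ conditions (a)--(c) are jointly contradictory whenever $\lambda+\rho+s\omega_k$ is regular, since (a) forces $L(s)\in\{0,d\}$ while (b) and (c) exclude $0$ and $d$ respectively (here $d\geq 1$). Hence every such $s$ must be singular, i.e.\ $\{1,\ldots,d\}\subseteq\Irr(\lambda)$; by Lemma~\ref{lm:maxt} the set $\Irr(\lambda)$ has at most $d$ elements, so equality follows.

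For sufficiency, assume $\Irr(\lambda)=\{1,\ldots,d\}$. A counting argument forces the $d$ numbers $t_\alpha$ (one per $\alpha\in\Phi^+\setminus\Phi'$) to be pairwise distinct integers, since otherwise the set of integers among them---which is exactly $\Irr(\lambda)$---would contain fewer than $d$ elements; thus $\{t_\alpha\}=\{1,\ldots,d\}$, each value attained once. Substituting into the formula for $L(s)$ shows that the regular integers are exactly $s\geq 0$, where $L(s)=0$, and $s\leq -d-1$, where $L(s)=d$, with the window $s\in\{-d,\ldots,-1\}$ entirely singular; checking (a)--(c) against these two regions is then immediate. The one point requiring care is to match the asymmetric boundary inequalities in the definition of Ulrich ($t<0$ for $H^0$ but $t\geq -d$ for $H^d$) with the integer window $\{-d,\ldots,-1\}$; once the non-increasing step function $L(s)$ is in place, the rest is formal.
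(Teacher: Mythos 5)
Your proof is correct and follows essentially the same route as the paper: both directions reduce, via Borel--Bott--Weil and the cardinality bound of Lemma~\ref{lm:maxt}, to the observation that the Ulrich conditions force all $d$ wall crossings of the line $\lambda+\rho+s\omega_k$ into the integer window $\{-d,\ldots,-1\}$, and conversely. The only difference is presentational: where the paper argues that the rays $s\geq 0$ and $s<-d$ each lie in a single Weyl chamber (plus asymptotics to identify the cohomological degree), you make the degree explicit as the non-increasing inversion-counting step function $L(s)$, which cleanly fills in the paper's ``a similar argument shows'' step for $t<-d$.
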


\begin{proof}
    Given an irreducible equivariant vector bundle $\CU^\lambda$, one has
    $\CU^\lambda(t)\simeq\CU^{\lambda+t\omega_k}$. It follows from the
    Borel--Bott--Weil theorem
    that $\Irr(\lambda) = \{1,2,\ldots,d\}$ if and only if
    $H^\bullet(X,\CU^\lambda(t))=0$ for $-d\leq t < 0$. Thus, the only if part.
    
    In Lemma~\ref{lm:maxt} we have shown that there can be at most $d$ rational
    intersection points of the line $\lambda+\rho+t\omega_k$ with the walls of the
    Weyl chambers. We conclude that if $\Irr(\lambda)=\{1,2,\ldots,d\}$,
    as the maximum number of intersections is reached,
    the ray $\{\lambda+\rho+t\omega_k\}_{t\geq 0}$
    lies in the interior of a single Weyl chamber.
    
    It follows from the Borel--Bott--Weil theorem that an equivariant vector
    bundle can have at most one nontrivial cohomology group, and $\CU^\mu$
    and $\CU^\nu$ have nontrivial cohomology in the same degree if
    $\mu+\rho$ and $\nu+\rho$ lie in the same Weyl chamber.
    As $H^0(X,\CU^\lambda(t))\ne 0$
    for $t\gg 0$, we deduce that $H^0(X, \CU^\lambda(t))\neq 0$ 
    and $H^i(X,\CU^\lambda(t))=0$ for all
    $t\geq 0$ and $0<i\leq d$.
    A similar argument shows that $H^i(X,\CU^\lambda(t))=0$ for all
    $t<-d$ and $0\leq i < d$.
\end{proof}

\section{Symplectic Grassmannians}
\label{sec:igr}

Let $2\leq k\leq n$ be integer numbers and let $V$ be a $2n$-dimensional vector space
equipped with a non-degenerate symplectic
form.\footnote{The case $k=1$ is trivial as $X\simeq\PP(V)$.}
Let $\sG=\SP(V)$ be the corresponding symplectic group.
We identify $P_\sG$ with the set of integral vectors $\ZZ^n\subset\RR^n$.
The root system~$\Phi$ consists of the vectors $\pm 2e_i$ and $\pm (e_i\pm e_j)$ for $i\neq j$,
the simple roots are $\eps_i=e_i-e_{i+1}$ for $i=1,\ldots,n-1$ and $\eps_n=2e_n$,
and the fundamental weights are $\omega_i=e_1+\ldots+e_i$ for $i=1,\ldots,n$.
In particular,
\[
    \rho=(n,n-1,\ldots,1).
\]

Let $X=\sG/\sP$, where $\sP=\sP_k$ is the maximal parabolic subgroup associated with
$\Delta\setminus\{\eps_k\}$.
The variety $X$ is nothing but the Grassmannian $\IGr(k, V)$ of $k$-dimensional
subspaces in $V$ isotropic with respect to the form.
The dimension of $X$ equals $d=k(2n-k)-k(k-1)/2$.

The $\sL$-dominant cone $P^+_\sL$ is identified with $\lambda\in\ZZ^n$ for which
\begin{equation}
    \label{eq:igr:l_dom}
    \lambda_1\geq\lambda_2\geq\ldots\geq\lambda_k,\qquad
    \lambda_{k+1}\geq\lambda_{k+2}\geq\ldots\geq\lambda_n\geq 0.
\end{equation}
For example,
if one denotes by $\CU$ the tautological rank $k$ subbundle, and
$\lambda = (\lambda_1, \ldots, \lambda_k,0,\ldots, 0)\in P^+_\sL$ is such that
$\lambda_k\geq 0$, then $\CU^\lambda\simeq \Sigma^\lambda\CU^*$,
where $\Sigma^\lambda$ is the Schur functor corresponding to $\lambda$.
In particular, $\CO_X(1)=\CU^{\omega_k}\simeq \Lambda^k\CU^*$.

For a given $\lambda\in P^+_\sL$ define auxiliary sequences
$\alpha\in\ZZ^k$ and $\beta\in\ZZ^{n-k}$ by
\begin{equation}
    \label{eq:igr_ab}
    (\alpha_1,\ldots,\alpha_k,\beta_1,\ldots,\beta_{n-k})=\lambda+\rho.
\end{equation}
Condition~\eqref{eq:igr:l_dom} now reads
\begin{equation}
    \label{eq:ab_dom}
    \alpha_1 > \alpha_2 > \ldots > \alpha_k,\qquad \beta_1 > \beta_2 > \ldots > \beta_{n-k} > 0.
\end{equation}

\begin{proposition}
    \label{prop:igr:irr}
    \[
        \Irr(\lambda)=\{\alpha_i\pm \beta_j\}_{1\leq i\leq k,\ 1\leq j\leq n-k}
        \ \bigcup
        \ \ZZ \cap \left\{\frac{\alpha_i+\alpha_j}{2}\right\}_{1\leq i\leq j\leq k}
    \]
\end{proposition}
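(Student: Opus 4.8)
The plan is to compute $\Irr(\lambda)$ directly from the definition. By definition $t\in\Irr(\lambda)$ precisely when the weight $\mu_t:=\lambda+\rho-t\omega_k$ is singular, and a weight is singular if and only if it is orthogonal to some positive root of $\sG$. Since $\omega_k=e_1+\ldots+e_k$, subtracting $t\omega_k$ shifts only the first $k$ coordinates, so in terms of the standard basis and the notation~\eqref{eq:igr_ab} one has
\[
    \mu_t = (\alpha_1-t,\ldots,\alpha_k-t,\ \beta_1,\ldots,\beta_{n-k}).
\]
First I would fix this explicit coordinate description, and then simply run through the list of positive roots $\Phi^+=\{2e_i\}\cup\{e_i-e_j\}_{i<j}\cup\{e_i+e_j\}_{i<j}$, computing $(\mu_t,\alpha)$ for each root $\alpha$ and solving for the integer $t$ that makes it vanish.

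The bookkeeping splits the roots according to how their support meets the index blocks $\{1,\ldots,k\}$ and $\{k+1,\ldots,n\}$. I would first dispose of the roots that can never produce a singularity: for $e_i-e_j$ with $i<j\le k$ one has $(\mu_t,e_i-e_j)=\alpha_i-\alpha_j>0$ independently of $t$ by~\eqref{eq:ab_dom}, and any root supported entirely in the second block contributes $\beta_{i-k}\pm\beta_{j-k}$ or $2\beta_{i-k}$, all strictly positive by~\eqref{eq:ab_dom}. Thus only roots meeting the first block survive. The root $2e_i$ with $i\le k$ gives $(\mu_t,2e_i)=2(\alpha_i-t)$, which vanishes at $t=\alpha_i$. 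The mixed roots $e_i-e_j$ and $e_i+e_j$ with $i\le k<j$ give inner products $(\alpha_i-t)\mp\beta_{j-k}$, vanishing at $t=\alpha_i\pm\beta_{j-k}$; as $i$ ranges over $1,\ldots,k$ and $j-k$ over $1,\ldots,n-k$ these produce exactly the first family $\{\alpha_i\pm\beta_j\}$, and being differences and sums of integers they automatically lie in $\ZZ$. Finally the roots $e_i+e_j$ with $i<j\le k$ give $\alpha_i+\alpha_j-2t$, vanishing at $t=\tfrac{1}{2}(\alpha_i+\alpha_j)$, which lies in $\ZZ$ only when $\alpha_i+\alpha_j$ is even.

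The remaining point is to merge the contribution of $2e_i$ ($i\le k$) with that of $e_i+e_j$ ($i<j\le k$): the former gives $t=\alpha_i=\tfrac{1}{2}(\alpha_i+\alpha_i)$, i.e.\ the diagonal $i=j$ of the second family, and it is always an integer. Collecting these yields the stated set $\ZZ\cap\{\tfrac{1}{2}(\alpha_i+\alpha_j)\}_{1\le i\le j\le k}$, and together with the first family this is exactly the claimed description of $\Irr(\lambda)$. The computation is entirely mechanical; the only place demanding a little care is the organization of $\Phi^+$ into the two blocks and the observation that the roots $e_i-e_j$ ($i,j\le k$) together with all roots in the second block are excluded by strict dominance~\eqref{eq:ab_dom}. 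This ruling-out step, rather than any genuine difficulty, is where a careless count could drop or double-count elements, so I would state it explicitly before assembling the final two families.
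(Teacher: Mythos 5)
Your proposal is correct and follows essentially the same route as the paper: both write out $\lambda+\rho-t\omega_k$ in coordinates, use that singularity means orthogonality to some root of type $C_n$, and let the strict inequalities~\eqref{eq:ab_dom} rule out the roots supported away from the first block, with the roots $2e_i$ ($i\leq k$) supplying the diagonal terms $\alpha_i=\tfrac{1}{2}(\alpha_i+\alpha_i)$ of the second family. The only blemish is a harmless sign slip (the root $e_i\mp e_j$ with $i\le k<j$ vanishes at $t=\alpha_i\pm\beta_{j-k}$, not the pairing you wrote), which does not affect the resulting union since both signs occur.
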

\begin{proof}
    The weight
    \[
        \lambda+\rho-t\omega_k =
        (\alpha_1-t, \alpha_2-t, \ldots, \alpha_l-t, \beta_1, \beta_2,\ldots, \beta_{n-k})
    \]
    is singular if and only if it is orthogonal to one of the roots of $\sG$.
    The roots are $\pm 2e_i$ and $\pm (e_i\pm e_j)$ for $i\neq j$. In particular,
    a weight is singular if and only if (i) one of the terms equals zero, or (ii) two different
    terms coincide, or (iii) a couple of terms sum up to zero.
    It follows from~\eqref{eq:ab_dom} that
    the three options are equivalent to (i) $t=\alpha_i$, (ii) $t=\alpha_i-\beta_j$, and (iii)
    $t=(\alpha_i+\alpha_j)/2$ or $t=\alpha_i+\beta_j$.
\end{proof}

\begin{remark}
    \label{rem:igr_dist}
    From Lemma~\ref{lm:maxt} we deduce that if $\CU^\lambda$ is Ulrich, then all the numbers
    $\alpha_i\pm \beta_j$ and $(\alpha_i+\alpha_j)/2$ are distinct and integer.
\end{remark}

\subsection{General isotropic Grassmannians}
\label{ssec:igr}
The goal of this section is to prove the following statement.
\begin{proposition}
    \label{prop:igr}
    There are no irreducible equivariant Ulrich bundles on $\LGr(k, V)$ for $2<k<n$.
\end{proposition}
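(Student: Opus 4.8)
The plan is to push the combinatorial reduction of Lemma~\ref{lm:ulr} to a contradiction. Suppose $\CU^\lambda$ is Ulrich, so $\Irr(\lambda)=\{1,\dots,d\}$. By the count in Lemma~\ref{lm:maxt} together with Proposition~\ref{prop:igr:irr} and Remark~\ref{rem:igr_dist}, the $d=2k(n-k)+\binom{k+1}{2}$ numbers
\[
\{\alpha_i+\beta_j\},\qquad \{\alpha_i-\beta_j\},\qquad \Big\{\tfrac{\alpha_i+\alpha_j}{2}\Big\}_{i\le j}
\]
are then pairwise distinct integers that exhaust $\{1,\dots,d\}$. The structural fact I would extract first, and which drives everything, is that integrality of \emph{every} half-sum $\tfrac{\alpha_i+\alpha_j}{2}$ forces $\alpha_1,\dots,\alpha_k$ to share a single common parity.

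Next I would locate the extreme elements. As the $\alpha_i$ and the $\beta_j$ are strictly decreasing with $\beta_{n-k}>0$, the largest of the $d$ numbers is $\alpha_1+\beta_1$ and the smallest is $\alpha_k-\beta_1$, so
\[
\alpha_1+\beta_1=d,\qquad \alpha_k-\beta_1=1,\qquad\text{whence}\qquad \alpha_1+\alpha_k=d+1 .
\]
Since $\alpha_1\equiv\alpha_k\pmod 2$, the sum $\alpha_1+\alpha_k$ is even, so necessarily $d$ is odd. A short computation gives $d\equiv\binom{k+1}{2}\pmod 2$, which is even exactly when $k\equiv 0,3\pmod 4$; in those cases we are already done, so half the residues of $k$ are eliminated for free.

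The core is the case $d$ odd. Here I would analyse $\{1,\dots,d\}$ from the bottom upward. The integers $1,2,\dots,\beta_1$ lie below $\alpha_k$ and below every $\alpha_i+\beta_j$, hence can only be differences $\alpha_i-\beta_j$; writing $\alpha_i-\beta_j=(\alpha_i-\alpha_k)+1+(\beta_1-\beta_j)$ and using that $\alpha_i-\alpha_k$ is even shows that the integer $2$ must come from the star at $\alpha_k$, which pins down $\beta_2=\beta_1-1$ (and, when $n-k=1$, forces $\beta_1=1$, so that each star $\{\alpha_i\}\cup\{\alpha_i\pm\beta_j\}$ collapses to a block of three consecutive integers centred at $\alpha_i$). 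Iterating along $1,2,3,\dots$, each successive integer is forced to be of a prescribed type — a $\beta$-shift, a centre $\alpha_i$, or a pairwise average — and this rigidly propagates the $\beta$-sequence and the gaps $\alpha_i-\alpha_{i+1}$, all of which are even. One then confronts this with the $\binom{k}{2}$ off-diagonal averages $\tfrac{\alpha_i+\alpha_j}{2}$: for $k\ge 3$ they cannot be slotted into the leftover positions without colliding with one another or with a star element. Already the most tightly packed configuration, in which the $\alpha_i$ form an arithmetic progression, fails, since then $\tfrac{\alpha_i+\alpha_j}{2}$ repeatedly lands on another $\alpha_\ell$.

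The main obstacle is exactly this last step, because of the branching in the bottom-up analysis: at each integer there is a genuine choice between ``extending the consecutive $\beta$-block'' and ``opening the next star,'' and one must verify that every branch is ultimately incompatible with the averages being distinct and filling the remaining slots. A convenient way to inject the parity information uniformly is to evaluate the identity $\sum_{x\in\Irr(\lambda)}q^{x}=q+\cdots+q^{d}$ at $q=-1$, turning ``distinct integers exhausting $\{1,\dots,d\}$'' into exact counts of the parities of the $\alpha_i$ and of the $\beta_j$; together with the forced consecutiveness of the top $\beta_j$ this already closes most $k$, and for the sporadic $k$ that survive a single such evaluation I would either evaluate at higher roots of unity or invoke the collision/rigidity analysis above. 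I expect the write-up to split into $n-k=1$ (finished by the triple/arithmetic-progression rigidity) and $n-k\ge 2$ (finished by the parity counts combined with consecutiveness of the $\beta_j$).
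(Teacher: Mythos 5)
Your preparatory steps are correct: the extremes give $\alpha_1+\beta_1=d$ and $\alpha_k-\beta_1=1$; integrality of all half-sums forces a common parity on the $\alpha_i$, hence $d=\alpha_1+\alpha_k-1$ is odd; since $d\equiv k(k+1)/2\pmod 2$, this already excludes $k\equiv 0,3\pmod 4$; and the forcing of $\beta_2=\beta_1-1$ from the integer $2$ is also right. But for $k\equiv 1,2\pmod 4$ (so for $k=5,6,9,10,\dots$) your text is a plan rather than a proof: you yourself point out that the bottom-up scan branches at every integer between ``extending the consecutive $\beta$-block'' and ``opening the next star,'' and you never resolve this branching. The fallback you offer, evaluating $\sum_{x\in\Irr(\lambda)}q^x=q+q^2+\cdots+q^d$ at $q=-1$ or at higher roots of unity, is not shown to decide anything: the parity of a half-sum $(\alpha_i+\alpha_j)/2$ depends on the residues of the $\alpha_i$ modulo $4$, which your common-parity statement does not control, so the assertion that such evaluations ``close most $k$'' is unsupported. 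As written, the proposal proves the proposition only for half of the residues of $k$, and the remaining half is exactly the hard case.

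The idea you are missing is to exploit both ends of $\{1,\dots,d\}$ simultaneously rather than scanning from one end. Let $t$ be the length of the maximal initial run $\beta_1\succ\beta_2\succ\cdots\succ\beta_t$. If $\beta\neq(l,l-1,\dots,1)$, then $\beta_t>1$ and the first missing integer below the run $\alpha_1+\beta_1>\cdots>\alpha_1+\beta_t$ can only be $\alpha_2+\beta_1$ (every half-sum is at most $\alpha_1<\alpha_1+\beta_t-1$, and $\alpha_1+\beta_{t+1}$ is too small by maximality of the run), which gives $\alpha_2=\alpha_1-t$; the mirror argument at the bottom end gives $\alpha_{k-1}=\alpha_k+t$. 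If $\beta=(l,l-1,\dots,1)$, the first star is a block of $2l+1$ consecutive integers, the candidate $\alpha_2+\beta_1$ for the next position down is excluded by parity, so that position must be $(\alpha_1+\alpha_2)/2$, giving $\alpha_2=\alpha_1-2(l+1)$, and symmetrically $\alpha_{k-1}=\alpha_k+2(l+1)$. In all cases $\alpha_1-\alpha_2=\alpha_{k-1}-\alpha_k$, hence $(\alpha_2+\alpha_{k-1})/2=(\alpha_1+\alpha_k)/2$: two half-sums indexed by the distinct pairs $(2,k-1)$ and $(1,k)$ (distinct precisely because $k\geq 3$) collide, contradicting Remark~\ref{rem:igr_dist}. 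This single collision closes every $k>2$ and every $l=n-k\geq 1$ with no branching and no congruence conditions on $k$; it is the step your proposal still needs to supply.
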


\begin{lemma}
    \label{lm:igr_dim}
    If there exists an irreducible equivariant Ulrich bundle on $X$, then $\dim X$ is odd.
\end{lemma}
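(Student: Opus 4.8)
The plan is to read off the parity of $d$ from the extremal elements of $\Irr(\lambda)$. By Lemma~\ref{lm:ulr} an irreducible equivariant Ulrich bundle $\CU^\lambda$ is the same as the equality $\Irr(\lambda)=\{1,2,\ldots,d\}$, and by Proposition~\ref{prop:igr:irr} together with Remark~\ref{rem:igr_dist} the set $\Irr(\lambda)$ then consists of exactly $d$ \emph{distinct integers}, namely the numbers $\alpha_i\pm\beta_j$ and $(\alpha_i+\alpha_j)/2$; in particular every half-sum $(\alpha_i+\alpha_j)/2$ is an integer. The whole argument will rest on identifying the largest and smallest of these numbers.

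First I would locate the maximum and minimum using the strict inequalities~\eqref{eq:ab_dom}. Since $\alpha_1>\cdots>\alpha_k$ and $\beta_1>\cdots>\beta_{n-k}>0$, every sum satisfies $\alpha_i+\beta_j\leq\alpha_1+\beta_1$, every difference satisfies $\alpha_i-\beta_j<\alpha_1\leq\alpha_1+\beta_1$ (because $\beta_j>0$), and every half-sum satisfies $(\alpha_i+\alpha_j)/2\leq\alpha_1<\alpha_1+\beta_1$; hence the maximal element is $\alpha_1+\beta_1$. Dually, $\alpha_k-\beta_1$ is no larger than any other difference (as $\alpha_i\geq\alpha_k$ and $\beta_j\leq\beta_1$), and it is strictly smaller than every sum and every half-sum, all of which exceed $\alpha_k>\alpha_k-\beta_1$; so the minimal element is $\alpha_k-\beta_1$. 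Because $\Irr(\lambda)=\{1,\ldots,d\}$, this forces $\alpha_1+\beta_1=d$ and $\alpha_k-\beta_1=1$.

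Adding these two identities eliminates $\beta_1$ and yields $\alpha_1+\alpha_k=d+1$. The concluding step is then purely arithmetic: integrality of the half-sum $(\alpha_1+\alpha_k)/2$, guaranteed by Remark~\ref{rem:igr_dist}, shows that $\alpha_1+\alpha_k$ is even, whence $d+1$ is even and $d$ is odd. The only point that demands genuine care — and the main obstacle — is the verification that $\alpha_1+\beta_1$ and $\alpha_k-\beta_1$ really are the global maximum and minimum of $\Irr(\lambda)$ rather than merely the extremes among the sums and differences; this is exactly where the positivity $\beta_1>0$ and the strict orderings~\eqref{eq:ab_dom} are needed, to guarantee that none of the half-sums $(\alpha_i+\alpha_j)/2$ overshoots either end.
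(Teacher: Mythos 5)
Your proof is correct and follows essentially the same route as the paper's: identify $\max\Irr(\lambda)=\alpha_1+\beta_1=d$ and $\min\Irr(\lambda)=\alpha_k-\beta_1=1$, add them, and use the forced integrality of the half-sum $(\alpha_1+\alpha_k)/2=(d+1)/2$ from Remark~\ref{rem:igr_dist} to conclude $d$ is odd. The only difference is that you spell out the verification that $\alpha_1+\beta_1$ and $\alpha_k-\beta_1$ are the global extremes of $\Irr(\lambda)$, which the paper simply asserts from~\eqref{eq:ab_dom} and Proposition~\ref{prop:igr:irr}.
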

\begin{proof}
    Assume that $\Irr(\lambda)=\{1,2,\ldots,d\}$. From~\eqref{eq:ab_dom} and
    Proposition~\ref{prop:igr:irr} we get
    \begin{equation*}
        \min \Irr(\lambda) = \alpha_k-\beta_1 = 1
        \qquad\text{and}\qquad
        \max \Irr(\lambda) = \alpha_1+\beta_1 = d.
    \end{equation*}
    Now, according to Remark~\ref{rem:igr_dist},
    \[
        \ZZ\supset\Irr(\lambda)\ni \frac{\alpha_1+\alpha_k}{2} =
        \frac{(\alpha_1+\beta_1)+(\alpha_k-\beta_1)}{2} = \frac{d+1}{2}.
    \]
    Thus, $d$ is necessarily odd.
\end{proof}

\begin{proof}[Proof of Proposition~\ref{prop:igr}]
    Let us introduce some temporary notation: for integers $a$ and $b$ we will write
    $a\prec b$ if $b=a+1$, and similarly $a\succ b$ if $a = b+1$. Put $l=n-k$.

    Assume that $\lambda$ corresponds to an Ulrich bundle. Then
    \begin{equation}
        \label{eq:igr_pf_1}
        \Irr(\lambda)=\{1,2,\ldots, d\}.
    \end{equation}
    Put $D=(d-1)/2$ and
    \begin{equation}
        \label{eq:alpha_pm_ij}
        \alpha^\pm_{ij} = \alpha_i \pm \beta_j - \frac{d+1}{2},
        \qquad
        \alpha_{ij} = \frac{\alpha_i+\alpha_j}{2} - \frac{d+1}{2}.
    \end{equation}
    One immediately observes that
    \begin{equation}
        \label{eq:alpha_pm}
        \alpha^\pm_{ij}=\alpha_{ii}\pm\beta_j,\qquad \alpha_{ij} =
            \frac{\alpha_{ii}+\alpha_{jj}}{2}\quad\text{for $1\leq i<j\leq k$.}
    \end{equation}

    It follows from~\eqref{eq:igr_pf_1} that the values $\alpha^\pm_{ij}$
    and $\alpha_{ij}$ are distinct and fill the integer range $[D, \ldots, -D]$.
    Also, from~\eqref{eq:ab_dom} we get inequalities
    \begin{alignat}{3}
        \notag
        \alpha_{ij} & \leq  \alpha_{pq} &
                    & &
        \qquad  & \text{if $i\geq p$ and $j\geq q$}, \\
        \label{eq:a_pm_ij_ord}
        \alpha^+_{i1} & > \alpha^+_{i2} > \ldots > \alpha^+_{il} >{} &
        \alpha_{ii} > \alpha^-_{il} > \ldots > \alpha^-_{i2} & > \alpha^-_{i1} &
                                                             & \text{for all $1\leq i\leq k$}, \\
        \notag
        \alpha^+_{1j} & > \alpha^+_{2j} > \ldots > \alpha^+_{kj}, &
        \alpha^-_{kj}  < \ldots < \alpha^-_{2j} & < \alpha^-_{1j} &
                                                & \text{for all $1\leq j\leq l$}.
    \end{alignat}
    From the proof of Lemma~\ref{lm:igr_dim} and~\eqref{eq:alpha_pm_ij} we see that
    \begin{equation}
        \label{eq:aij_val}
        \alpha^+_{11} = D,\quad\alpha^-_{k1} = -D,\quad \alpha_{11}=-\alpha_{kk},\quad
        \alpha_{1k}=0.
    \end{equation}

    Let us first assume that $(\beta_1, \beta_2,\ldots, \beta_l)\neq (l, l-1,\ldots, 1)$.
    The last condition is equivalent to the existence of such a $t\in\{1,\ldots,l\}$ that
    \begin{equation*}
        \beta_1 \succ \beta_2 \succ \ldots \succ \beta_t > 1.
    \end{equation*}
    It follows from~\eqref{eq:alpha_pm} that
    \begin{equation*}
        D = \alpha^+_{11} \succ \alpha^+_{12}\succ \ldots \succ \alpha^+_{1t} \not\succ \alpha_{11}.
    \end{equation*}
    From inequalities~\ref{eq:a_pm_ij_ord} we deduce that the next position after $\alpha^+_{1t}$
    must be taken by $\alpha^+_{21}$. In particular,
    \begin{equation}
        \label{eq:a21}
        \alpha^+_{21} = \alpha_{22} + \beta_1 = \alpha^+_{11}-t.
    \end{equation}
    A similar argument shows that
    \begin{equation*}
        -D = \alpha^-_{k1} \prec \alpha^-_{k2} \prec \ldots \prec \alpha^-_{kt} \not\prec \alpha^-_{k-1,1},
    \end{equation*}
    which implies
    \begin{equation}
        \label{eq:ak11}
        \alpha^-_{k-1,1} = \alpha_{k-1,k-1} - \beta_1 = \alpha^-_{11}+t.
    \end{equation}
    Combining~\eqref{eq:a21} and~\ref{eq:ak11}, we deduce
    \begin{equation*}
        \alpha_{2,k-1} = \frac{\alpha_{22}+\alpha_{k-1,k-1}}{2} = \frac{\alpha_{11}+\alpha_{kk}}{2} = \alpha_{1k}.
    \end{equation*}
    That contradicts the assumption that all the values $\alpha_{ij}$ are distinct.

    Finally, in case $(\beta_1, \beta_2,\ldots, \beta_l)=(l,l-1,\ldots,1)$ one has
    \begin{equation*}
        D=\alpha^+_{11} \succ \alpha^+_{12} \succ \ldots \succ \alpha^+_{1k} \succ
        \alpha_{11} \succ \alpha^-_{1k} \succ \ldots \succ \alpha^-_{12} \succ \alpha^-_{11}
        > \alpha_{12} > \alpha^+_{21} \succ \alpha^+_{22} \succ \ldots \succ \alpha^+_{2k}
        \succ \alpha_{22}.
    \end{equation*}
    Inequalities~\eqref{eq:a_pm_ij_ord} imply that $\alpha_{12}$ is the only possible
    immediate predecessor of $\alpha^-_{11}$, thus
    \begin{equation*}
        \alpha_{12} = \frac{\alpha_{11}+\alpha_{22}}{2} = \alpha_{11}-(l+1)
        \quad \Rightarrow \quad
        \alpha_{22} = \alpha_{11}-2(l+1).
    \end{equation*}
    A similar argument shows that
    \begin{equation*}
        \alpha_{k-1,k} = \frac{\alpha_{k-1,k-1}+\alpha_{kk}}{2} = \alpha_{kk}+(l+1)
        \quad \Rightarrow \quad
        \alpha_{k-1,k-1} = \alpha_{kk}+2(l+1).
    \end{equation*}
    Combining the last two equalities we deduce that
    \begin{equation*}
        \alpha_{2,k-1} = \frac{\alpha_{22}+\alpha_{k-1,k-1}}{2} = \frac{\alpha_{11}+\alpha_{kk}}{2} = \alpha_{11},
    \end{equation*}
    which once again contradicts the assumption that all $\alpha_{ij}$ are distinct.
\end{proof}

\subsection{Isotropic Grassmannians of planes}
\label{ssec:igr_2}
We would like to classify all irreducible equivariant
Ulrich bundles on isotropic Grassmannians of planes. Thus, throughout
this section $k=2$ and $X=\IGr(2,V)$.

\begin{proposition}
    \label{prop:igr2}
    Let $p$ be a positive divisor of $n-1$ such that $(n-1)/p=2q+1$ is odd. Then
    the bundle on $\IGr(2,V)$ corresponding to the weight
    \begin{equation}
        \label{eq:igr2l}
        \lambda = (n-2+p,\: n-1-p,
            \: \underbrace{2pq,\: \ldots,\: 2pq}_{2p},
            \: \underbrace{2p(q-1),\: \dots,\: 2p(q-1)}_{2p},
            \: \ldots,
            \: \underbrace{2p,\: \ldots,\: 2p}_{2p},
            \: \underbrace{0,\: \ldots,\: 0}_{p-1}
        )
    \end{equation}
    is Ulrich. Moreover, all irreducible equivariant Ulrich bundles on $\IGr(2,V)$ are of this form.
\end{proposition}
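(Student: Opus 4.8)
The plan is to convert the Ulrich condition into a single polynomial divisibility, resolve that divisibility by evaluating at roots of unity, and then use a uniqueness argument to recover the exact weight. First I would specialise Lemma~\ref{lm:ulr} and Proposition~\ref{prop:igr:irr} to $k=2$. Here $d=\dim X=4n-5$, and $\Irr(\lambda)$ consists of the $4(n-2)$ numbers $\alpha_i\pm\beta_j$, the two integers $\alpha_1,\alpha_2$, and---only if it is integral---the value $(\alpha_1+\alpha_2)/2$. Since $4n-6<d$, a count shows that $\CU^\lambda$ can be Ulrich only if $(\alpha_1+\alpha_2)/2\in\ZZ$ and all $d$ numbers are distinct. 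Exactly as in the proof of Lemma~\ref{lm:igr_dim} one gets $\alpha_2-\beta_1=1$ and $\alpha_1+\beta_1=d$, whence $\alpha_1+\alpha_2=d+1$ and $(\alpha_1+\alpha_2)/2=(d+1)/2=D+1$ with $D:=(d-1)/2=2n-3$. Setting $p:=(\alpha_1-\alpha_2)/2\geq1$ and subtracting the common centre $(d+1)/2$ from every element, the Ulrich condition becomes the statement that the symmetric set $\{0\}\cup\{\pm p\pm\beta_j\}\cup\{\pm p\}$ is exactly $\{-D,\dots,D\}$.

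Second, I would encode sets of integers by Laurent polynomials, $f_S(x)=\sum_{a\in S}x^a$, and put $C':=\{0\}\cup\{\pm\beta_j\}$. The reformulated condition is the identity $(x^p+x^{-p})\,f_{C'}(x)=f_T(x)$, where $f_T(x)=\sum_{j=-D}^{D}x^j-1$. A short manipulation gives the clean factorisation $f_T(x)=x^{-D}(1+x+\dots+x^{D-1})(1+x^{D+1})$; writing $P(x)$ for its polynomial part, the identity reads $f_{C'}(x)=x^{p-D}P(x)/(1+x^{2p})$. As $f_{C'}$ is a Laurent polynomial and $1+x^{2p}$ is monic, this forces $1+x^{2p}\mid P(x)$ in $\ZZ[x]$.

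Third---the heart of the matter---I would solve this divisibility. Being separable, $1+x^{2p}$ divides $P$ if and only if $P(\zeta)=0$ for every root $\zeta=e^{\pi i u/(2p)}$ (with $u$ odd) of $1+x^{2p}$. Each such $\zeta$ has order divisible by $4$, so $\zeta^D\neq1$ because $D$ is odd; hence the factor $1+\zeta+\dots+\zeta^{D-1}$ cannot vanish and we must have $\zeta^{D+1}=-1$. Since $D+1=2(n-1)$, this amounts to $u(n-1)\equiv p\pmod{2p}$ for all odd $u$; taking $u=1,3$ and subtracting gives $p\mid(n-1)$, and then $u=1$ gives that $(n-1)/p$ is odd, while these two arithmetic conditions are clearly also sufficient. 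I expect this roots-of-unity bookkeeping to be the main obstacle---in particular the parity observation $\zeta^D\neq1$, which is what rules out the alternative factor and turns the problem into a congruence.

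Finally, for each admissible $p$ the identity of the second step determines $f_{C'}$, hence $C'$, hence $\lambda$, \emph{uniquely}. It therefore remains only to exhibit a single Ulrich bundle for each such $p$ and match it with~\eqref{eq:igr2l}. Writing $m=(n-1)/p$ (odd), the divisibility $1+x^{2p}\mid1+x^{D+1}$ holds and $f_{C'}=x^{p-D}(1+x+\dots+x^{D-1})\sum_{i=0}^{m-1}(-1)^ix^{2pi}$; I would check directly that this has $0/1$ coefficients whose support is the centred form of~\eqref{eq:igr2l}, or equivalently compute $\Irr(\lambda)$ for~\eqref{eq:igr2l} and verify that its $4n-5$ values exhaust $\{1,\dots,d\}$. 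Combined with the uniqueness, this shows that the bundles of~\eqref{eq:igr2l} are precisely the irreducible equivariant Ulrich bundles on $\IGr(2,V)$.
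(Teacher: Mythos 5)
Your proposal is correct, and it takes a genuinely different route from the paper's proof. The paper argues by induction on $l=n-2$ entirely inside the ordered combinatorics: using the inequalities~\eqref{eq:a_pm_ij_ord} and pigeonhole arguments, it shows that the run of consecutive integers descending from $D=\alpha^+_{11}$ forces $(\beta_1,\dots,\beta_{2p})=(D-p,\dots,D-3p+1)$, peels off these $2p$ values, and reduces to $l'=l-2p$; the constraint $n-1=p(2q+1)$ and the explicit weight~\eqref{eq:igr2l} emerge along the induction. You instead convert the condition $\Irr(\lambda)=\{1,\dots,d\}$ (after the same normalisation $\alpha_2-\beta_1=1$, $\alpha_1+\beta_1=d$, correctly imported from Lemma~\ref{lm:igr_dim}) into the Laurent-polynomial identity $(x^p+x^{-p})f_{C'}(x)=f_T(x)$, factor $f_T(x)=x^{-D}(1+x+\dots+x^{D-1})(1+x^{D+1})$, and reduce everything to the divisibility $1+x^{2p}\mid P(x)$, which you solve at the roots of $1+x^{2p}$: every such root has even order while $D$ is odd, so the factor $1+\zeta+\dots+\zeta^{D-1}$ cannot vanish, forcing $\zeta^{D+1}=-1$, i.e.\ the congruence $u(n-1)\equiv p\pmod{2p}$ for all odd $u$, which is exactly $p\mid n-1$ with odd quotient; all of this checks out. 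Your uniqueness observation (for fixed $p$ the quotient $x^{p-D}P(x)/(1+x^{2p})$ determines $f_{C'}$, hence $C'$, hence $\lambda$) is what replaces the paper's inductive reconstruction, and it is sound. What your approach buys is a clean separation of the arithmetic obstruction from the reconstruction of the weight, with uniqueness essentially for free from polynomial division; what the paper's induction buys is that existence and the explicit form~\eqref{eq:igr2l} come out of the same argument, whereas you still owe the closing computation that $x^{p-D}(1+x+\dots+x^{D-1})\sum_{i=0}^{m-1}(-1)^ix^{2pi}$ has $0/1$ coefficients supported on the centred form of~\eqref{eq:igr2l} (hence in particular defines a dominant weight). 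That check does go through: the coefficient of $x^N$ in the product is an alternating sum of signs $(-1)^i$ over a set of \emph{consecutive} values of $i$ which starts at $i=0$ whenever $N\le D-1$, so it lies in $\{0,1\}$ there, and palindromicity of both factors covers the remaining exponents; but since the existence half of the proposition rests on it, this verification should be written out rather than asserted.
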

\begin{proof}
    We will freely use the notation introduced along the proof of Proposition~\ref{prop:igr}.
    Remark that the only $\alpha_{ij}$ we are left with are $\alpha_{12}=0$ and
    $\alpha_{11}=-\alpha_{22}$. Let us also spell out $d=4n-5$, $l=n-2$, and $D=2n-3$.

    Assume that the values $\alpha^\pm_{ij}$ and $\alpha_{ij}$ cover the range
    $[D,\ldots,-D]$, and put $p=\alpha_{11}$. Let us show by induction on $l$
    that $l=2pq+(p-1)$ for some integer $q\geq 0$ and that
    \begin{align}
        \notag
        \beta_1 &= 2n-3-p     & \beta_{2p+1} &= 2n-3-5p  & &\ldots & \beta_{2p(q-1)+1} &= 5p-1 & \beta_{2pq+1} &= p-1  \\
        \label{eq:igr2bi}
        \beta_2 &= 2n-4-p     & \beta_{2p+2} &= 2n-4-5p  & &\ldots &  \beta_{2p(q-1)+2} &= 5p-2 & & \ \:\vdots \\
        \notag
        & {}\ \:\vdots      &              & {}\ \:\vdots    & &       &  & {}\ \:\vdots & \beta_{l}&=1 \\
        \notag
        \beta_{2p} &= 2n-2-3p & \beta_{4p}   &= 2n-2-7p  & &\ldots & \beta_{2p(q-1)+2p} &= 3p
    \end{align}
    As $l=n-2$, the equality $l=2pq+(p-1)$ is equivalent to $n-1=p(2q+1)$, which is what
    we would like to show. Also, by~\eqref{eq:igr_ab} and~\eqref{eq:alpha_pm_ij} one has
    \[
        \lambda_1 = \alpha_{11} + \frac{d+1}{2}-n,\qquad
        \lambda_2 = \alpha_{22} + \frac{d+1}{2}-(n-1),\qquad
        \lambda_i = \beta_{i-2} - (n+1-i)\quad \text{for $2<i\leq n$.}
    \]
    Simplifying these formulas, one comes to a conclusion that~\eqref{eq:igr2bi}
    corresponds exactly to~\eqref{eq:igr2l}. Thus, we are indeed about to prove
    an equivalent statement.

    In the base case $l=3$ we need to find all possible integers $p, \beta_1>0$
    such that $\pm p\pm \beta_1$ and $\pm p$ are distinct and take all the values
    $\{\pm 1, \pm 2, \pm 3\}$. One immediately checks that the only possibility is
    $p=2$ and $\beta_1=1$.

    Let us prove the inductive step.
    Recall that the $\alpha^+_{11}=D$ is necessarily the maximal element.
    From~\eqref{eq:a_pm_ij_ord} we know that the only elements that can take the
    values $\{\pm 1, \pm 2, \dots, \pm (p-1)\}$ are $\alpha^\pm_{ij}$. Thus, by
    the pigeonhole principle $l \geq p-1$.
    Next, if $l=p-1$, then by the same principle $\alpha^-_{21}<p$. As a consequence,
    \[
        D=\alpha^+_{11} \succ \alpha^+_{12}\succ \ldots \succ \alpha^+_{1,p-1}\succ \alpha_{11}=p,
    \]
    which implies
    \[
        \beta_1\succ \beta_2\succ \ldots \succ \beta_{p-1}=1.
    \]
    That is exactly what~\eqref{eq:igr2bi} gets reduced to.

    Finally, assume that $l>p-1$. By the pigeonhole principle again
    and the order on $\alpha^-_{2,i}$ we argue that 
    $\alpha^+_{21}=\alpha^+_{11}-2p>\alpha_{11}$. Looking at~\eqref{eq:a_pm_ij_ord},
    we deduce that
    \[
        D=\alpha^+_{11}\succ \alpha^+_{12}\succ \ldots \succ \alpha^+_{1,2p}\succ \alpha^+_{21}.
    \]
    In particular,
    \[
        \beta_1\succ \beta_2\succ \ldots \succ \beta_{2p} \not\succ \beta_{2p+1},
    \]
    which implies
    \begin{equation}
        \label{eq:igr2b}
        \begin{split}
            D={}&\alpha^+_{11} \succ \alpha^+_{12}\succ \ldots \succ \alpha^+_{1,2p}\succ
            \alpha^+_{21}\succ \alpha^+_{22}\succ \ldots \succ \alpha^+_{2,2p},\\    
            &\alpha^-_{1,2p} \succ \ldots \succ \alpha^-_{12}\succ \alpha^-_{11}\succ
            \alpha^-_{2,2p}\succ \ldots \succ \alpha^-_{22}\succ \alpha^-_{21} = -D.
        \end{split}
    \end{equation}
    We conclude that
    \begin{equation}
        \label{eq:igr2bf}
        (\beta_1,\beta_2,\ldots,\beta_{2p}) = (D-p, D-p-1,\ldots,D-3p+1).
    \end{equation}

    In follows from~\eqref{eq:igr2b} that the range $[D-4p,\ldots,-D+4p]$ is covered
    by the values
    \[
        \alpha_{11}=p,
        \quad \alpha_{12}=0,
        \quad \alpha_{22}=-p,
        \quad \text{and}
        \quad \left\{\alpha^\pm_{ij}\right\}_{2p < j \leq l}.
    \]
    Applying the induction hypothesis for $l'=l-2p$ we find that $l-2p=2pq'+(p-1)$.
    In particular, $l=2pq+(p-1)$ for $q=q'+1$.
    We also get the exact values of $\beta_j$ for $j>2p$ in terms of $p$ and $n'=n-2p$.
    Rewriting these values back in terms of $p$ and $n$ together with~\eqref{eq:igr2bf}
    proves~\eqref{eq:igr2bi}.
\end{proof}

Remark that for any $n$ one has an obvious choice $p=n-1$. Then
$(n-1)/p=1$ is odd and the corresponding weight is $(2n-3,0,\ldots,0)$.
Moreover, if $n-1$ has no odd divisors apart from $1$, this choice of $p$
is the only possible. Summarising, we get the following statement.

\begin{corollary}
    For any $n$ the bundle $S^{2n-3}\CU^*$ on $\IGr(2, 2n)$ is Ulrich. 
    If $n=2^r+1$, it is the only irreducible equivariant Ulrich bundle
    up to isomorphism.
\end{corollary}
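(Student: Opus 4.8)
The plan is to obtain both claims as immediate specializations of Proposition~\ref{prop:igr2} together with the remark preceding the corollary, so the whole argument is essentially bookkeeping. First I would invoke Proposition~\ref{prop:igr2} with the distinguished divisor $p = n-1$ of $n-1$: here the cofactor $(n-1)/p = 1 = 2\cdot 0 + 1$ is odd, so $q = 0$. I would then substitute into the weight~\eqref{eq:igr2l} and check that it collapses to $(2n-3, 0, \ldots, 0)$. Indeed, the first two entries become $n-2+(n-1) = 2n-3$ and $n-1-(n-1) = 0$; every block of size $2p$ indexed by $q, q-1, \ldots, 1$ disappears because that index range is empty when $q = 0$; and only the trailing block of $p-1 = n-2$ zeros survives.

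Next I would identify $\CU^{(2n-3,\,0,\ldots,0)}$ using the description recalled in Section~\ref{sec:igr}. Since the weight has the form $(\lambda_1, \lambda_2, 0, \ldots, 0)$ with $\lambda_2 = 0 \geq 0$, one has $\CU^\lambda \simeq \Sigma^\lambda \CU^*$, and for the single-row partition $(2n-3, 0)$ the Schur functor $\Sigma^{(2n-3,0)}$ is just the symmetric power $S^{2n-3}$. This gives $\CU^\lambda \simeq S^{2n-3}\CU^*$ and, by Proposition~\ref{prop:igr2}, shows that it is Ulrich, which settles the first assertion.

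For the uniqueness claim I would specialize to $n = 2^r + 1$, so that $n - 1 = 2^r$. By Proposition~\ref{prop:igr2}, every irreducible equivariant Ulrich bundle on $\IGr(2, 2n)$ corresponds to a divisor $p \mid (n-1)$ whose cofactor $(n-1)/p$ is odd. The divisors of $2^r$ are the powers $2^0, 2^1, \ldots, 2^r$, and $2^r/p$ is odd precisely when $p = 2^r = n-1$. Hence $p = n-1$ is the only admissible parameter, and it recovers exactly the weight computed above; therefore $S^{2n-3}\CU^*$ is the unique irreducible equivariant Ulrich bundle up to isomorphism.

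There is no genuine obstacle here, since the entire statement reduces to Proposition~\ref{prop:igr2}. The only steps that demand a little care are verifying that the block pattern in~\eqref{eq:igr2l} degenerates correctly at $q = 0$ (so that no spurious constant blocks remain) and recalling that the Schur functor attached to a one-row partition is the symmetric power; both are routine.
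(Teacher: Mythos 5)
Your proposal is correct and follows exactly the paper's own argument: the paper derives this corollary from the remark preceding it, which likewise takes $p=n-1$ (so $q=0$) in Proposition~\ref{prop:igr2}, observes that the weight~\eqref{eq:igr2l} collapses to $(2n-3,0,\ldots,0)$, i.e.\ to $S^{2n-3}\CU^*$, and notes that when $n-1=2^r$ no other divisor $p$ has odd cofactor. Your write-up merely makes explicit the bookkeeping (the degeneration of the blocks at $q=0$ and the identification of the one-row Schur functor with the symmetric power) that the paper leaves implicit.
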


\begin{example} The following are the smallest isotropic Grassmannians of planes
    that admit two and three nonisomorphic irreducible Ulrich bundles respectively.
    \begin{enumerate}
        \item There are exactly two nonisomorphic equivariant irreducible
            Ulrich bundles on $\IGr(2,8)$:
            \[
                S^5\CU^*
                \quad \text{and}
                \quad \CU^*(2)\otimes \Sigma^{[2,2]}(\CU^\perp\!/\:\CU).
            \]
        \item There are exactly three nonisomorphic equivariant irreducible
            Ulrich bundles on $\IGr(2,20)$:
            \[
                S^{17}\CU^*,
                \quad S^5\CU^*(6)\otimes \Sigma^{[6,6,6,6,6,6]}(\CU^\perp\!/\:\CU),
                \quad \text{and}
                \quad \CU^*(8)\otimes \Sigma^{[8,8,6,6,4,4,2,2]}(\CU^\perp\!/\:\CU).
            \]
    \end{enumerate}
    Here $\Sigma^{[\mu]}$ denotes the symplectic Schur functor corresponding to $\mu$.
\end{example}

\section{Odd orthogonal Grassmannians}
\label{sec:ogr}
We would like to deal now with type $B_n$. Namely, let $\sG=\SO(V)$,
where $V$ is a $(2n+1)$-dimensional vector space equipped with a non-degenerate
symmetric form. We identify the root system of $\sG$ with
the set of all integer vectors of length $\sqrt{2}$ or $1$. For the simple roots
we choose $\eps_i=e_i-e_{i+1}$ for $i=1,\ldots,n-1$, and $\eps_n=e_n$. The fundamental
weights then are $\omega_i=e_1+\ldots+e_i$ for $i=1,\ldots,n-1$ and
$\omega_n=\frac{1}{2}(e_1+\ldots+e_n)$, while
\[
    \rho=\left(n-\tfrac{1}{2},\: n-\tfrac{3}{2},\:\ldots,\:\tfrac{1}{2}\right).
\]

We would like to classify equivariant irreducible vector bundles on $X=\sG/\sP_k\simeq\OGr(k,V)$
for all $k=1,\ldots,n-1$.
The dimension of $X$ is $d=k(2n-k)-k(k+1)/2$.
The weight lattice $P_\sG$ is identified with the set of vectors with all integer or
halfinteger coordinates,
$P_\sG=\ZZ^n\cup (\thalf+\ZZ)^n$,
while the dominant cone $P^+_\sL$ consists of all $\lambda\in P_\sG$ such that
\[
    \lambda_1\geq\lambda_2\geq\ldots\geq\lambda_k,\qquad
    \lambda_{k+1}\geq\ldots\geq\lambda_n\geq 0.
\]

Let us again introduce the notation
\[
    (\alpha_1,\alpha_2,\ldots,\alpha_k, \beta_1,\beta_2,\ldots,\beta_{n-k})
= \lambda + \rho
\]
For $\lambda\in P^+_\sL$ we deduce inequalities
\begin{equation}
    \label{eq:b:abdom}
    \alpha_1>\alpha_2>\ldots>\alpha_k,\qquad \beta_1>\beta_2>\ldots>\beta_{n-k}>0.
\end{equation}
Remark that the only difference between~\eqref{eq:b:abdom} and~\eqref{eq:ab_dom}
is that in type $B_n$ the terms $\alpha_i$ and $\beta_j$ are allowed to be
halfinteger all at once.

The root systems in type $B_n$ and $C_n$ are extremely similar. An argument analagous
to the one used in Proposition~\ref{prop:igr:irr} shows that
\[
    \Irr(\lambda)=\{\alpha_i\pm\beta_j\}_{1\leq i\leq k,\ 1\leq j\leq n-k}\ \bigcup
    \ \ZZ\cap\left\{\frac{\alpha_i+\alpha_j}{2}\right\}_{1\leq i\leq j\leq k}.
\]
As the cardinality of $\Irr(\lambda)$ is at most $d$, we deduce that all the values
$(\alpha_i+\alpha_j)/2$ are integer. In particular, all $\alpha_i$ are integer, thus
$\beta_j$ are forced to be as well. We come to the following conclusion.

\begin{proposition}
    A weight $\lambda=(\lambda_1,\lambda_2,\ldots,\lambda_n)$ corresponds to an Ulrich
    bundle on $\OGr(k, 2n+1)$ if and only if 
    $(\lambda_1-\tfrac{1}{2}, \lambda_2-\tfrac{1}{2},\ldots,\lambda_n-\frac{1}{2})$
    is a weight corresponding to an Ulrich bundle for $\IGr(k, 2n)$.
\end{proposition}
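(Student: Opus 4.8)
The plan is to take the coordinatewise shift $\lambda \mapsto \mu := \lambda - (\thalf,\ldots,\thalf)$ as the asserted correspondence and to verify that it is compatible with the Ulrich criterion of Lemma~\ref{lm:ulr}. The essential observation is numerical: writing $\rho$ for the half-sum of positive roots in type $B_n$ and $\rho'=(n,n-1,\ldots,1)$ for the one in type $C_n$, one has $\rho'-\rho=(\thalf,\ldots,\thalf)$, so that $\mu+\rho'=\lambda+\rho$. Consequently the auxiliary sequence $(\alpha_1,\ldots,\alpha_k,\beta_1,\ldots,\beta_{n-k})$ attached to $\mu$ on $\IGr(k,2n)$ is \emph{literally the same} as the one attached to $\lambda$ on $\OGr(k,2n+1)$. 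Since the set $\Irr$ is given by the same expression in $(\alpha,\beta)$ in both types---the singularity conditions (a coordinate vanishing, two coordinates coinciding, or two summing to zero) being identical for the root systems $B_n$ and $C_n$---Proposition~\ref{prop:igr:irr} and its type $B_n$ analogue give $\Irr(\mu)=\Irr(\lambda)$ as subsets of $\ZZ$.

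Next I would note that the two Grassmannians share the same dimension, $d = 2k(n-k)+\binom{k+1}{2}$; indeed this is precisely the number of values produced by the $\Irr$ formula once they are distinct and integral. Granting this, Lemma~\ref{lm:ulr} characterises Ulrichness in both cases by the single equality $\Irr=\{1,\ldots,d\}$. As $\Irr(\lambda)=\Irr(\mu)$ and the target $\{1,\ldots,d\}$ is the same, the two Ulrich conditions are one and the same, and the proposition collapses to checking that the shift carries an $\sL$-dominant weight of one type to an $\sL$-dominant weight of the other.

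This last, purely bookkeeping, step is where the only genuine (if mild) care is needed, and is what I would treat as the main obstacle. Because $\sL$-dominance is encoded entirely by the shared inequalities~\eqref{eq:b:abdom} on $(\alpha,\beta)$, it is transported automatically in both directions; the one thing to verify is the ambient weight lattice. In the forward direction the Ulrich property forces every $\alpha_i$ to be integral (as recorded before the statement), hence $\lambda\in(\thalf+\ZZ)^n$ and $\mu=\lambda-(\thalf,\ldots,\thalf)\in\ZZ^n$ lands in the $C_n$ weight lattice; the boundary inequality is safe since $\beta_{n-k}>0$ forces $\mu_n\geq 0$. Conversely, if $\mu\in\ZZ^n$ is an $\sL$-dominant Ulrich weight for $\IGr(k,2n)$, then $\lambda=\mu+(\thalf,\ldots,\thalf)\in(\thalf+\ZZ)^n$ is automatically a weight of $\SO(2n+1)$, and again $\beta_{n-k}>0$ gives $\lambda_n\geq\thalf\geq 0$. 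Both implications then follow, completing the argument.
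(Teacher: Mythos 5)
Your proof is correct and takes essentially the same route as the paper's: the paper likewise observes that $\mu+\rho_{C_n}=\lambda+\rho_{B_n}$, so the $(\alpha,\beta)$ data and hence the Ulrich conditions coincide in types $B_n$ and $C_n$. You merely spell out the bookkeeping (equality of the two dimensions, integrality of the $\alpha_i$ forcing $\lambda\in(\thalf+\ZZ)^n$, transfer of $\sL$-dominance) that the paper's two-sentence proof leaves implicit.
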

\begin{proof}
    The conditions on $\alpha$ and $\beta$ under which the bundle $\CU^\lambda$ is Ulrich
    are exactly the same in types $C_n$ and $B_n$.
    Thus, $\lambda$~is Ulrich for $\OGr(k,2n+1)$ if and only if
    $\lambda+\rho_{B_n}-\rho_{C_n}$ is Ulrich for $\IGr(k,2n)$.
\end{proof}

Using Proposition~\ref{prop:igr2} we can now find all
equivariant Ulrich bundles on $\OGr(2,2n+1)$.
\begin{corollary}
    Every irreducible equivariant Ulrich bundle $\CU^\lambda$ on $\OGr(2,2n+1)$
    corresponds to $\lambda$ of the form
    \begin{equation}
        \label{eq:b:og2l}
        \lambda = (n-\tfrac{3}{2}+p,\: n-\tfrac{1}{2}-p,
            \: \underbrace{2pq+\tfrac{1}{2},\: \ldots,\: 2pq+\tfrac{1}{2}}_{2p},
            \: \ldots,
            \: \underbrace{2p+\tfrac{1}{2},\: \ldots,\: 2p+\tfrac{1}{2}}_{2p},
            \: \underbrace{\tfrac{1}{2}\: \ldots,\: \tfrac{1}{2}}_{p-1}
        ),
    \end{equation}
    where $p$ is a positive divisor of $n-1$ such that $(n-1)/p=2q+1$ is odd.
\end{corollary}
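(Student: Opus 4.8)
The plan is to deduce this statement directly from the two results immediately preceding it: the Proposition identifying Ulrich weights on $\OGr(2,2n+1)$ with those on $\IGr(2,2n)$, and Proposition~\ref{prop:igr2}, which classifies the latter. First I would specialize the reduction Proposition to $k=2$: the bundle $\CU^\lambda$ is an irreducible equivariant Ulrich bundle on $\OGr(2,2n+1)$ if and only if the shifted weight $\lambda-(\thalf,\ldots,\thalf)=\lambda+\rho_{B_n}-\rho_{C_n}$ is an Ulrich weight for $\IGr(2,2n)$, since $\rho_{B_n}-\rho_{C_n}=(-\thalf,\ldots,-\thalf)$.

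Next I would invoke Proposition~\ref{prop:igr2}: every Ulrich weight $\mu$ on $\IGr(2,2n)$ is of the form~\eqref{eq:igr2l}, where $p$ runs over the positive divisors of $n-1$ with $(n-1)/p=2q+1$ odd. Because the shift $\mu\mapsto\mu+(\thalf,\ldots,\thalf)$ is a bijection between the two sets of weights, the Ulrich weights on $\OGr(2,2n+1)$ are exactly the weights $\lambda=\mu+(\thalf,\ldots,\thalf)$ arising from~\eqref{eq:igr2l}, subject to the same divisibility constraint on $p$.

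The one remaining task is the coordinate bookkeeping: adding $\thalf$ to each entry of~\eqref{eq:igr2l} must reproduce~\eqref{eq:b:og2l}. The first two coordinates become $n-\tfrac{3}{2}+p$ and $n-\tfrac{1}{2}-p$; each of the $q$ blocks of $2p$ equal entries $2pm$ (for $m=1,\ldots,q$) turns into a block of entries $2pm+\thalf$, recovering the blocks $2pq+\thalf,\ldots,2p+\thalf$; and the trailing block of $p-1$ zeros becomes a block of $\thalf$'s. I expect no genuine obstacle here --- the corollary is a direct translation of Proposition~\ref{prop:igr2} across the half-integer shift, and the only thing to verify is this routine substitution.
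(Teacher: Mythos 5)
Your proposal is correct and is exactly the paper's (implicit) argument: the corollary is stated right after the reduction Proposition with the remark that it follows ``using Proposition~\ref{prop:igr2},'' i.e.\ by shifting the classified $\IGr(2,2n)$ weights of~\eqref{eq:igr2l} by $(\thalf,\ldots,\thalf)$. Your coordinate bookkeeping checks out, so there is nothing to add.
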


Let $\CS$ denote the \emph{spinor} bundle on $X$, namely,
\[
    \CS = \CU^{(\thalf,\: \thalf,\:\ldots,\: \thalf)}.
\]
\begin{corollary}
    Up to isomorphism $\CS$ is the only irreducible equivariant Ulrich bundle
    on an odd-dimensional quadric $Q^{2n-1}$.
\end{corollary}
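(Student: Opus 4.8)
The plan is to realize the odd quadric as the $k=1$ member of the orthogonal family already set up, namely $Q^{2n-1} \simeq \OGr(1, 2n+1)$, and then feed this into Lemma~\ref{lm:ulr}. Here $d = \dim Q^{2n-1} = 2n-1$, and $\lambda + \rho = (\alpha_1, \beta_1, \ldots, \beta_{n-1})$ has a single $\alpha$-coordinate. Since the only index pair $(i,j)$ with $1 \le i \le j \le k$ available for $k=1$ is $(1,1)$, the formula for $\Irr(\lambda)$ collapses to
\[
    \Irr(\lambda) = \{\alpha_1 \pm \beta_j\}_{1 \le j \le n-1} \cup \bigl(\ZZ \cap \{\alpha_1\}\bigr),
\]
and by Lemma~\ref{lm:ulr} the bundle $\CU^\lambda$ is Ulrich exactly when this set is $\{1, 2, \ldots, 2n-1\}$.

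First I would dispose of the half-integer weights. If every coordinate of $\lambda + \rho$ were half-integer, then $\alpha_1 \notin \ZZ$ while each $\alpha_1 \pm \beta_j \in \ZZ$, so $\Irr(\lambda)$ would have at most $2(n-1) = 2n-2$ elements and could not fill $\{1, \ldots, 2n-1\}$. Hence $\lambda + \rho$ is integral, $\alpha_1 \in \ZZ$, and $\Irr(\lambda) = \{\alpha_1\} \cup \{\alpha_1 \pm \beta_j\}$ is a set of at most $2n-1$ integers.

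The key step is a symmetry count. The set $\{\alpha_1\} \cup \{\alpha_1 \pm \beta_j\}$ is symmetric about $\alpha_1$, whereas $\{1, \ldots, 2n-1\}$ is symmetric about $n$; comparing the common value $\tfrac{1}{2}(\max + \min)$ forces $\alpha_1 = n$. Then $\{n \pm \beta_j\}$ must exhaust $\{1, \ldots, 2n-1\} \setminus \{n\}$, so each $\beta_j$ satisfies $1 \le \beta_j \le n-1$; as the $\beta_j$ are $n-1$ distinct positive integers, $\{\beta_1, \ldots, \beta_{n-1}\} = \{1, \ldots, n-1\}$. Thus $\lambda + \rho = (n, n-1, \ldots, 1)$, and subtracting $\rho = (n-\thalf, \ldots, \thalf)$ gives $\lambda = (\thalf, \ldots, \thalf) = \omega_n$, i.e. $\CU^\lambda = \CS$; conversely one checks this $\lambda$ is $\sL$-dominant and realizes $\Irr(\lambda) = \{1, \ldots, 2n-1\}$.

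I do not expect a genuine obstacle here: once $Q^{2n-1}$ is placed in the $k=1$ frame the whole argument is elementary, and the only delicate bookkeeping is excluding the half-integer case and pinning the centre of symmetry to $n$. As an alternative route one could instead invoke the correspondence Proposition to reduce to $\IGr(1, 2n) \simeq \PP^{2n-1}$, whose sole equivariant irreducible Ulrich bundle is $\CO$ (the weight $0$), and then shift by $\rho_{B_n} - \rho_{C_n} = (\thalf, \ldots, \thalf)$ to recover $\omega_n$.
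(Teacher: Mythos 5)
Your proof is correct, but your primary argument is genuinely different from the paper's. The paper disposes of this corollary in one line: it invokes the preceding Proposition (the shift correspondence $\lambda \mapsto \lambda - (\thalf,\ldots,\thalf)$ between Ulrich weights on $\OGr(k,2n+1)$ and on $\IGr(k,2n)$) with $k=1$, so that the question reduces to $\IGr(1,2n)\simeq\PP^{2n-1}$, where the only irreducible equivariant Ulrich bundle is $\CO$ with weight $0$; shifting back gives $\omega_n$, i.e.\ the spinor bundle $\CS$. This is exactly the ``alternative route'' you sketch in your last sentence. Your main argument instead runs the $k=1$ case of the $\Irr$-formula directly: you correctly rule out the half-integral weights by the cardinality bound $|\Irr(\lambda)|\le 2n-2 < d = 2n-1$, pin down $\alpha_1 = n$ by comparing the centres of symmetry $\tfrac12(\max+\min)$ of $\Irr(\lambda)$ and of $\{1,\ldots,2n-1\}$, and conclude $\lambda+\rho=(n,n-1,\ldots,1)$, hence $\lambda=\omega_n$. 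This is sound (the symmetry of $\{\alpha_1\}\cup\{\alpha_1\pm\beta_j\}$ about $\alpha_1$ and the monotonicity $\beta_1>\cdots>\beta_{n-1}>0$ do force $\alpha_1=n$ and $\beta_j=n-j$), and it buys self-containedness: you never need the $B_n$--$C_n$ transfer proposition nor the classification on projective space, only the Borel--Bott--Weil criterion of Lemma~\ref{lm:ulr}. What the paper's route buys is brevity and a demonstration of the general transfer principle, which it then reuses for $\OGr(2,2n+1)$; your computation, by contrast, is the pattern that the paper itself follows in the harder even-quadric case (Proposition~\ref{prop:d:q}), where no such transfer is available.
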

\begin{proof}
    The only irreducible Ulrich bundle on $\PP^{2n-1}$ is $\CO$, which corresponds
    to $\lambda=(0,0,\ldots,0)$.
\end{proof}

\begin{corollary}
    \label{cor:bn}
    The only nonmaximal odd orthogonal Grassmannians that admit an irreducible
    equivariant Ulrich bundle
    are the odd-dimensional quadrics $\OGr(1, V)\simeq Q^{2n-1}$ and the Grassmannians
    of isotropic planes $\OGr(2, V)$.
\end{corollary}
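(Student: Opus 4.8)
The plan is to reduce everything to type $C_n$ by means of the proposition that identifies Ulrich weights on $\OGr(k,2n+1)$ with Ulrich weights on $\IGr(k,2n)$, and then to split the argument into cases according to $k$. Since the nonmaximal odd orthogonal Grassmannians are exactly those with $1\le k\le n-1$, I would organize the proof as: the two boundary cases $k=1$ and $k=2$, where Ulrich bundles do exist and hence belong on the list, versus the range $3\le k\le n-1$, where I claim none exist.

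For the existence side, the case $k=1$ gives $\OGr(1,V)\simeq Q^{2n-1}$, and the preceding corollary already exhibits the spinor bundle $\CS$ as the unique irreducible equivariant Ulrich bundle. The case $k=2$ is $\OGr(2,V)$, for which the corollary producing the family~\eqref{eq:b:og2l} shows that Ulrich bundles exist (and classifies them). Thus both of these Grassmannians appear in the stated list, and nothing further is required for these two values of $k$.

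For the nonexistence side, I would suppose $3\le k\le n-1$ and apply the proposition relating the two types: an irreducible equivariant Ulrich bundle on $\OGr(k,2n+1)$ exists if and only if one exists on $\IGr(k,2n)$. Since $k<n$, the target is a nonmaximal (non-Lagrangian) symplectic Grassmannian, and since $k>2$ it lies in the range $2<k<n$ covered by Proposition~\ref{prop:igr}, which asserts that no irreducible equivariant Ulrich bundles exist there. Consequently there are none on $\OGr(k,2n+1)$ either, which excludes this entire range and leaves only $k=1$ and $k=2$.

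This argument is essentially an assembly of results already established, so there is no genuine analytic obstacle. The only point that demands care is the bookkeeping of index ranges across the two root systems: one must verify that ``nonmaximal for $B_n$'', namely $k\le n-1$, is covered without gaps by the two handled boundary values together with the hypothesis $2<k<n$ of Proposition~\ref{prop:igr}, so that no value of $k$ is left unaccounted for.
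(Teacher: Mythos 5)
Your proposal is correct and matches the paper's (implicit) argument: the paper states this corollary without a separate proof precisely because it is the assembly you describe, namely the reduction proposition from type $B_n$ to type $C_n$ combined with Proposition~\ref{prop:igr} for $2<k<n$ and the existing classifications for $k=1$ and $k=2$. Your bookkeeping of the index ranges ($k=1$, $k=2$, and $3\le k\le n-1$ mapping into $2<k<n$ on the symplectic side) is exactly the point that makes the corollary immediate, and it is handled correctly.
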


\section{Even orthogonal Grassmannians}
\label{sec:dn}
Let us now deal with groups of type $D_n$. Namely, let $\sG=\SO(V)$, where
$V$ is a $2n$-dimensional vector space equipped with a non-degenerate
symmetric form.
We identify the root of $\sG$ system with the set of vectors $\pm (e_i\pm e_j)$, $i\neq j$.
The simple roots are $\eps_i=e_i-e_{i+1}$ for $i=1,\ldots, n-1$ and
$\eps_n=e_{n-1}+e_{n}$,
while the fundamental weights are $\omega_i=e_1+\ldots+e_i$ for $i=1,\ldots,n-2$, and
\[
    \begin{aligned}
        \omega_{n-1} &= \thalf(e_1+\ldots+e_{n-1}-e_n), \\
        \omega_n     &= \thalf(e_1+\ldots+e_{n-1}+e_n).
    \end{aligned}
\]
Thus, $\rho=(n-1, n-2, \ldots, 0)$.

Let $X=\sG/\sP_k$. The dimension of $X$ is $d=k(2n-k)-k(k+1)/2$.
It is well known that $X\simeq \OGr(k, V)$ for $k=1,\ldots, n-2$,
while $\sG/\sP_{n-1}$ and $\sG/\sP_{n}$ are isomorphic to the two connected components
of the Grassmannian of maximal isotropic subspaces $\OGr(n, V)$.
The latter two varieties will
be treated in Section~\ref{sec:max}, here we assume $1\leq k\leq n-2$.

As in type $B_n$, the weight lattice $P_\sG$ is identified with the set of all
(half)integer vectors $\ZZ^n\cup(\thalf+\ZZ)^n$,
while the dominant cone $P^+_\sL$ consists of all $\lambda\in P_\sG$ such that
\[
    \lambda_1\geq\lambda_2\geq\ldots\geq\lambda_k,\qquad
    \lambda_{k+1}\geq\ldots\geq|\lambda_n|.
\]

We once again introduce the notation
\[
    (\alpha_1,\alpha_2,\ldots,\alpha_k, \beta_1,\beta_2,\ldots,\beta_{n-k})
= \lambda + \rho
\]
For $\lambda\in P^+_\sL$ one has inequalities
\begin{equation}
    \label{eq:d:abdom}
    \alpha_1>\alpha_2>\ldots>\alpha_k,\qquad \beta_1>\beta_2>\ldots>|\beta_{n-k}|.
\end{equation}

An argument similar to the one used Proposition~\ref{prop:igr:irr} shows that
\begin{equation}
    \label{eq:d:irr}
    \Irr(\lambda)=\{\alpha_i\pm\beta_j\}_{1\leq i\leq k,\ 1\leq j\leq n-k}
    \ \bigcup
    \ \ZZ\cap\left\{\frac{\alpha_i+\alpha_j}{2}\right\}_{1\leq i<j\leq k}.
\end{equation}

\begin{remark}
    \label{rem:d}
    Equality~\eqref{eq:d:irr} together with $\beta_{n-k}=\lambda_n$ imply that $\lambda$
    corresponds to an Ulrich bundle if and only if the weight
    $\lambda'=(\lambda_1,\lambda_2,\ldots,\lambda_{n-1},-\lambda_n)$ does.
    Moreover, $\CU^\lambda$ can not be Ulrich as long as $\lambda_n=0$.
    Thus, it is sufficient to classify irreducible equivariant Ulrich bundles with
    $\lambda_n > 0$.
\end{remark}

\subsection{General orthogonal Grassmannians}
\label{ssec:ogr:big}
We are first going to prove the following statement.

\begin{proposition}
    \label{prop:d}
    There are no irreducible equivariant Ulrich bundles on $\OGr(k,V)$ for $3<k\leq n-2$.
\end{proposition}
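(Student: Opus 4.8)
The plan is to mirror the structure of the proof of Proposition~\ref{prop:igr}, exploiting that the root system in type $D_n$ differs from type $C_n$ only by the absence of the long roots $\pm 2e_i$; concretely, this means that in~\eqref{eq:d:irr} the \emph{diagonal} terms $\tfrac{1}{2}(\alpha_i+\alpha_i)=\alpha_i$ no longer belong to $\Irr(\lambda)$, so only the off-diagonal averages $\alpha_{ij}$ with $i<j$ survive. By Remark~\ref{rem:d} I may assume $\lambda_n=\beta_{n-k}>0$, hence every $\beta_j>0$. Suppose $\CU^\lambda$ is Ulrich, so $\Irr(\lambda)=\{1,\dots,d\}$. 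As in Lemma~\ref{lm:igr_dim} the extremes are $\min\Irr(\lambda)=\alpha_k-\beta_1=1$ and $\max\Irr(\lambda)=\alpha_1+\beta_1=d$, whence $\alpha_1+\alpha_k=d+1$. Since $k\geq 4$, the pair $(1,k)$ is a genuine off-diagonal index pair, so $\alpha_{1k}=\tfrac12(\alpha_1+\alpha_k)\in\Irr(\lambda)\subset\ZZ$, forcing $d$ to be odd. This is the point where type $D_n$ already behaves like type $C_n$ despite the missing diagonal, and it lets me recenter: set $D=(d-1)/2$ and introduce $\alpha^\pm_{ij}$, $\alpha_{ij}$ and the virtual symbols $\alpha_{ii}$ exactly as in~\eqref{eq:alpha_pm_ij}--\eqref{eq:alpha_pm}, retaining the ordering~\eqref{eq:a_pm_ij_ord} and the temporary notation $a\succ b$ (meaning $a=b+1$) from the proof of Proposition~\ref{prop:igr}. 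The only change from the symplectic case is that the filled multiset is now $\{\alpha^\pm_{ij}\}_{i,j}\cup\{\alpha_{ij}\}_{i<j}$, which must still exhaust $[-D,D]$ since $2k(n-k)+\binom{k}{2}=d$; in particular $\alpha^+_{11}=D$, $\alpha^-_{k1}=-D$, $\alpha_{1k}=0$, while the $k$ ``diagonal positions'' $\alpha_{ii}$ are now \emph{holes} that other terms must plug.

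I would then split into the two cases of Proposition~\ref{prop:igr}, writing $l=n-k$. In the case $(\beta_1,\dots,\beta_l)\neq(l,l-1,\dots,1)$, let $t$ be the length of the initial consecutive run, so $\beta_1\succ\dots\succ\beta_t>1$; then $\alpha^+_{11}\succ\dots\succ\alpha^+_{1t}$ occupy the top block $\{D-t+1,\dots,D\}$. The clean observation driving the argument is that every off-diagonal $\alpha_{ij}$ and every $\alpha^-_{ij}$ lies strictly below $\alpha_{11}=D-\beta_1\leq D-l\leq D-t$, so the position $D-t$ can only be filled by a $+$-term from a later column; the largest such is $\alpha^+_{21}$, forcing $\alpha^+_{21}=D-t$ and hence $\alpha_{22}=\alpha_{11}-t$. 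The symmetric computation at the bottom gives $\alpha_{k-1,k-1}=\alpha_{kk}+t$, and averaging yields
\[
    \alpha_{2,k-1}=\frac{\alpha_{22}+\alpha_{k-1,k-1}}{2}=\frac{\alpha_{11}+\alpha_{kk}}{2}=\alpha_{1k}=0 .
\]
Because $k\geq 4$, the pairs $(2,k-1)$ and $(1,k)$ are distinct valid off-diagonal pairs, contradicting the distinctness of the $\alpha_{ij}$. This is exactly where the hypothesis $k>3$ enters: for $k=3$ the index $(2,k-1)=(2,2)$ would be diagonal and carry no information.

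The genuinely new ingredient is the remaining case $(\beta_1,\dots,\beta_l)=(l,l-1,\dots,1)$, where the absent diagonal does the work. Here $\alpha^+_{1j}$ fills $\{\alpha_{11}+1,\dots,\alpha_{11}+l\}$ and $\alpha^-_{1j}$ fills $\{\alpha_{11}-l,\dots,\alpha_{11}-1\}$, leaving the single position $\alpha_{11}$ empty — in type $C_n$ this was occupied by the diagonal term itself. Since $[-D,D]$ must be covered, some other term equals $\alpha_{11}$; but every off-diagonal $\alpha_{ij}$ and every $\alpha^-_{ij}$ is $<\alpha_{11}=\max_i\alpha_{ii}$, so the culprit must be $\alpha^+_{ij}$ with $i\geq 2$, i.e. $\alpha_{ii}=\alpha_{11}-\beta_j$. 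Using that the $\beta$'s are consecutive, the neighbour $\alpha^+_{i,j+1}$ (if $j<l$) equals $\alpha_{11}-1=\alpha^-_{1l}$, while $\alpha^-_{il}$ (if $j=l$) equals $\alpha_{11}-2=\alpha^-_{1,l-1}$; either way two distinct terms collide, the final contradiction. This step uses $l=n-k\geq 2$, guaranteed by $k\leq n-2$.

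I expect the main obstacle to be the bookkeeping in the first case: verifying rigorously that $\alpha^+_{21}$, and nothing else, lands in the position immediately below the top run. The comparison $\alpha_{ij},\alpha^-_{ij}<\alpha_{11}\leq D-t$ noted above is what makes this transparent and transfers the combinatorial control of Proposition~\ref{prop:igr} to the present setting; once it is in place the second case is short, so the only real novelty is the hole-filling argument forced by the missing diagonal.
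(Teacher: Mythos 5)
Your strategy is essentially the paper's, and your Case 2 (the hole-filling argument for $\beta=(l,l-1,\dots,1)$) is correct and pleasantly clean. But there is a genuine gap: your case split $\beta\neq(l,\dots,1)$ versus $\beta=(l,\dots,1)$ is exhaustive only for \emph{integer} weights, whereas in type $D_n$ the weight lattice is $\ZZ^n\cup(\thalf+\ZZ)^n$, so all the $\alpha_i$ and $\beta_j$ may simultaneously be half-integers (spinor-type weights). The sequence $\beta=(l-\thalf,l-\tfrac{3}{2},\dots,\thalf)$ --- exactly the case the paper isolates and treats first --- lands in your Case 1, where your argument collapses. For this $\beta$ the maximal initial consecutive run is the whole sequence, with $\beta_t=\beta_l=\thalf\not>1$ and $\beta_1=l-\thalf<l$, so your key chain of inequalities $\alpha_{11}=D-\beta_1\leq D-l\leq D-t$ fails. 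Worse, the conclusion you want is simply false there: $\alpha_{11}$ is then a half-integer, so the diagonal ``hole'' sits between two integer positions and the row-1 terms $\alpha^+_{11}\succ\dots\succ\alpha^+_{1l}\succ\alpha^-_{1l}\succ\dots\succ\alpha^-_{11}$ fill $2l$ consecutive integers with no gap at all; the position $D-l$ is occupied by $\alpha^-_{1l}=\alpha_{11}-\thalf$, not by any $\alpha^+_{2j}$, so no relation of the form $\alpha_{22}=\alpha_{11}-t$ can be extracted, and neither of your two cases rules this configuration out.

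The gap is easy to close, in either of two ways. (1) Follow the paper: for $\beta=(l-\thalf,\dots,\thalf)$ each row $i$ fills a consecutive block of length $2l$, and the element immediately below row 1's block must be $\alpha_{12}$ --- it cannot be $\alpha^+_{21}$, since that would force $\alpha_{12}=(\alpha_{11}+\alpha_{22})/2=\alpha_{11}-l$, a half-integer, hence not an element of the recentered $\Irr(\lambda)\subset\ZZ$. This gives $\alpha_{22}=\alpha_{11}-(2l+1)$, the mirror argument at the bottom gives $\alpha_{k-1,k-1}=\alpha_{kk}+(2l+1)$, and averaging yields $\alpha_{2,k-1}=\alpha_{1k}$, the same contradiction as in your Case 1 (again using $k>3$). (2) Kill the whole half-integer branch at once by parity: being Ulrich forces all $\binom{k}{2}$ averages $(\alpha_i+\alpha_j)/2$ to be integers, since otherwise $|\Irr(\lambda)|<2kl+\binom{k}{2}=d$; writing $\alpha_i=m_i+\thalf$ this requires $m_i+m_j$ to be odd for all $i<j$, which is impossible already for $k\geq 3$. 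With patch (2) in place, only integer $\beta$'s remain and your Case 1/Case 2 dichotomy genuinely covers everything; this is arguably tidier than the paper's route, which instead absorbs the integer-minimal case $\beta=(l,\dots,1)$ into its ``run hits $\alpha^+_{21}$'' claim and treats the spinor-minimal case separately.
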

\begin{proof}
    The proof is very similar to the proof of Proposition~\ref{prop:igr}.
    Put $l=n-k$ and write $a\prec b$ ($a\succ b$) whenever $a+1=b$ ($a=b+1$ respectively).
    Let us assume that $\Irr(\lambda)=\{1,\ldots,d\}$. Then necessarily 
    \begin{equation*}
        \alpha_1+\beta_1 = d,\quad \alpha_k-\beta_1 = 1,\quad \alpha_1+\alpha_k=d+1.
    \end{equation*}
    To keep notation consistent, we introduce $\alpha^\pm_{ij}=\alpha_i\pm\beta_j$ and
    $\alpha_{ij}=\tfrac{\alpha_i+\alpha_j}{2} - \tfrac{d+1}{2}$. According to our assumption,
    the values $\alpha^\pm_{ij}$ together with $\alpha_{ij}$ for $1\leq i<j\leq k$ are all
    integer, distinct and span the range $[D,\ldots,-D]$, where $D=(d-1)/2$.
    According to Remark~\ref{rem:d}, we can also restrict ourselves to the case
    $\beta_l>0$, thus inequalities~\eqref{eq:a_pm_ij_ord} remain valid.

    In case $\beta=(l-\thalf, \ldots, \frac{3}{2}, \thalf)$ one has
    \begin{equation*}
        \alpha^+_{i1}\succ \alpha^+_{i2}\succ \cdots\succ \alpha^+_{il}\succ
        \alpha^-_{il}\succ \cdots \succ \alpha^-_{i2}\succ \alpha^-_{i1}
    \end{equation*}
    for all $i=1,\ldots, k$. In particular, $\alpha^-_{11}\succ\alpha_{12}$ and
    $\alpha^+_{k,1}\prec \alpha_{k-1,k}$, which implies that $\alpha_{2,k-1}=\alpha_{1,k}$.

    In case $\beta\neq(l-\thalf, \ldots, \frac{3}{2}, \thalf)$ we claim that
    \begin{equation}
        \label{eq:ogr:s2}
        \alpha^+_{11}\succ \alpha^+_{12}\succ \cdots\succ \alpha^+_{1t}\succ\alpha^+_{21}
    \end{equation}
    for some $1\leq p\leq k$. Indeed, otherwise
    \[
        \alpha^+_{11}\succ \alpha^+_{12}\succ \cdots\succ \alpha^+_{1l}\succ\alpha_{12}.
    \]
    In particular, $\alpha_1+\beta_1=\frac{\alpha_1+\alpha_2}{2}+1$. As $\beta_1\geq\thalf$
    and $\alpha_2\leq \alpha_1-1$, we deduce that $\beta=(l-\thalf, \ldots, \frac{3}{2}, \thalf)$.
    Similarly, we argue that
    \begin{equation*}
        \alpha^-_{k,1}\prec \alpha^-_{k,2}\prec \cdots\prec \alpha^-_{k,t}\prec\alpha^-_{k-1,1}
    \end{equation*}
    for the same $p$ as in~\eqref{eq:ogr:s2}, which implies that
    \[
        \alpha_2=\alpha_1-p,\quad\text{and}\quad \alpha_{k-1} = \alpha_k+p.
    \]
    Thus, $\alpha_{2,k-1}=\alpha_{1,k}$, which contradicts the assumption that all the values
    are distinct.
\end{proof}

\subsection{Small even orthogonal Grassmannians}
\label{ssec:d:small}
Let us classify irreducible equivariant Ulrich bundles on $\OGr(k,2n)$ for
$k \leq 3$. We will freely use the notation introduced along the proof of Proposition~\ref{prop:d}.

\subsubsection{Even-dimensional quadrics}
\begin{proposition}
    \label{prop:d:q}
    The only irreducible equivariant Ulrich bundles on an even-dimensional quadric $Q^{2n-2}$
    up to isomorphism are the spinor bundles $\CS^\pm$, which correspond to the weights
    \(
        \lambda^\pm = (\thalf, \thalf,\ldots,\thalf,\pm\thalf).
    \)
\end{proposition}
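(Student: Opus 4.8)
The plan is to specialize the general type-$D_n$ analysis to $k=1$, where $X=\OGr(1,V)\simeq Q^{2n-2}$ and $d=2n-2$. In this case there is a single coordinate $\alpha_1$ together with the sequence $\beta=(\beta_1,\ldots,\beta_{n-1})$, so the index set $\{1\le i<j\le k\}$ is empty and \eqref{eq:d:irr} collapses to $\Irr(\lambda)=\{\alpha_1\pm\beta_j\}_{1\le j\le n-1}$. This is a set of exactly $2(n-1)=d$ candidate values, so by Lemma~\ref{lm:ulr} the bundle $\CU^\lambda$ is Ulrich precisely when these $d$ numbers are distinct and exhaust $\{1,\ldots,d\}$.

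First I would pin down $\alpha_1$. By Remark~\ref{rem:d} it suffices to treat the case $\lambda_n>0$, so \eqref{eq:d:abdom} gives $\beta_1>\cdots>\beta_{n-1}>0$; then the largest element of $\Irr(\lambda)$ is $\alpha_1+\beta_1$ and the smallest is $\alpha_1-\beta_1$. Setting these equal to $d$ and $1$ respectively and adding yields $2\alpha_1=d+1=2n-1$, which forces $\alpha_1=n-\thalf$ to be a half-integer and hence, since every $\alpha_1\pm\beta_j$ must lie in $\ZZ$, forces all $\beta_j$ to be half-integers as well.

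Next I would determine the $\beta_j$ by a counting argument. Since $\beta_j>0$, each of the $n-1$ distinct integers $\alpha_1+\beta_j$ strictly exceeds $\alpha_1=n-\thalf$, so they all lie in $\{n,n+1,\ldots,2n-2\}$; as this set has exactly $n-1$ elements, the values $\alpha_1+\beta_j$ must fill it completely. Combined with the ordering $\beta_1>\cdots>\beta_{n-1}$ this forces $\beta_j=n-\thalf-j$, i.e. $\beta=(n-\tfrac32,\ldots,\tfrac32,\thalf)$. One then checks immediately that $\alpha_1-\beta_j=j$ runs over $\{1,\ldots,n-1\}$, so $\Irr(\lambda)=\{1,\ldots,d\}$ and the bundle is Ulrich. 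Translating back through $\lambda+\rho=(\alpha_1,\beta_1,\ldots,\beta_{n-1})$ with $\rho=(n-1,\ldots,0)$ gives $\lambda=(\thalf,\ldots,\thalf)=\lambda^+$, so $\CS^+$ is the unique irreducible equivariant Ulrich bundle with $\lambda_n>0$.

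Finally, Remark~\ref{rem:d} supplies the companion solution: $\CU^\lambda$ is Ulrich if and only if $\CU^{\lambda'}$ is, where $\lambda'$ flips the sign of the last coordinate; applying this to $\lambda^+$ produces $\lambda^-=(\thalf,\ldots,\thalf,-\thalf)$ and hence the second spinor bundle $\CS^-$. I expect no serious obstacle, since the whole argument is a clean specialization of the bookkeeping already carried out in the large-$k$ cases; the one point genuinely requiring care is the parity observation that $d+1=2n-1$ is odd, which is exactly what forces the half-integral (spinorial) weights and rules out any integral solution.
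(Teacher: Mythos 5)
Your proof is correct and follows essentially the same route as the paper: specialize \eqref{eq:d:irr} to $k=1$, note that the $d=2n-2$ candidate values $\alpha_1\pm\beta_j$ are totally ordered and must fill $\{1,\ldots,d\}$, and read off $(\alpha_1,\beta_1,\ldots,\beta_{n-1})=(n-\thalf,n-\tfrac{3}{2},\ldots,\thalf)$ before subtracting $\rho$. The only cosmetic difference is that you first reduce to $\lambda_n>0$ via Remark~\ref{rem:d} and recover $\CS^-$ by the sign flip, whereas the paper carries $|\beta_{n-1}|$ through the chain and obtains both spinor weights at once.
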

\begin{proof}
    According to inequalities~\eqref{eq:a_pm_ij_ord}, the only possible complete ordering is
    \[
        2n-2=\alpha_1+\beta_1\succ\cdots\succ\alpha_1+|\beta_{n-1}|\succ
        \alpha_1-|\beta_{n-1}|\succ\cdots\succ\alpha_1-\beta_1=1.
    \]
    Thus, $(\alpha, \beta_1,\ldots,\beta_{n-2},|\beta_{n-1}|)=(n-\thalf,n-\tfrac{3}{2},\ldots,\tfrac{3}{2},\thalf)$.
    It remains to subtract $\rho=(n-1,\ldots,1,0)$.
\end{proof}

\subsubsection{Even orthogonal Grassmannians of planes}
\begin{proposition}
    \label{prop:d:2}
    Let $p\in\thalf+\ZZ$ and $q\in\ZZ$ be nonnegative integers such that $n-2=p(2q+1)-\thalf$. Then
    the bundle on $X=\OGr(2,2n)$ corresponding to the weight
    \begin{multline}
        \label{eq:d:u2}
        \lambda = (n-2+p,\: n-1-p,
            \: \underbrace{2pq+\thalf,\: \ldots,\: 2pq+\thalf}_{2p},
            \: \underbrace{2p(q-1)+\thalf,\: \dots,\: 2p(q-1)+\thalf}_{2p},
            \: \ldots,\\
            \: \underbrace{2p+\thalf,\: \ldots,\: 2p+\thalf}_{2p},
            \: \underbrace{\thalf,\: \ldots,\: \thalf}_{p-\thalf}
        )
    \end{multline}
    is Ulrich. Moreover, all irreducible equivariant Ulrich bundles on $X$ are of this form
    up to negating $\lambda_n$.
\end{proposition}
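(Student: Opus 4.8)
The plan is to follow the proof of Proposition~\ref{prop:igr2} almost verbatim, turning the problem into a combinatorial tiling of an integer interval and then inducting on $l=n-2$. By Lemma~\ref{lm:ulr} the bundle $\CU^\lambda$ is Ulrich iff $\Irr(\lambda)=\{1,\ldots,d\}$, and by Remark~\ref{rem:d} it suffices to treat $\lambda_n>0$, so that all $\beta_j>0$ and the inequalities~\eqref{eq:a_pm_ij_ord} remain valid. Specializing $k=2$ gives $d=4n-7$, $D=(d-1)/2=2n-4$ and $l=n-2$. By~\eqref{eq:d:irr} the only surviving diagonal value is $\alpha_{12}=(\alpha_1+\alpha_2)/2$, and the extremal identities $\alpha_1+\beta_1=d$, $\alpha_2-\beta_1=1$ force $\alpha_1+\alpha_2=d+1$; recentering by $(d+1)/2$ as in~\eqref{eq:alpha_pm_ij} turns this into $\alpha_{12}=0$. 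Writing $p=\alpha_{11}$, so $\alpha_{22}=-p$, the recentered spectrum is $\{\pm p\pm\beta_j\}_{1\le j\le l}\cup\{0\}$ by~\eqref{eq:alpha_pm}, and being Ulrich means exactly that this multiset equals $\{-D,\ldots,D\}$ with no repetition.

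For the existence half I would substitute~\eqref{eq:d:u2} into these formulas, read off the $\beta_j$ as the displayed arithmetic progressions of length $2p$ (an odd integer, since $p\in\thalf+\ZZ$), and check directly that the blocks $\{p\pm\beta_j\}$ and $\{-p\pm\beta_j\}$ dovetail to exhaust $\{-D,\ldots,D\}$: inside a block the values step by one, successive blocks meet consecutively, and the two spinor halves interleave around the centre. The only real point is that the half-integer shift places the omitted diagonal value $p$ strictly between two integers, so the two halves $\{1,\ldots,p-\thalf\}$ and $\{p+\thalf,\ldots\}$ abut without leaving an integer uncovered.

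For the classification I would assume $\Irr(\lambda)=\{1,\ldots,d\}$ and argue by induction on $l$, reusing the bookkeeping set up in the proof of Proposition~\ref{prop:d}. As in Proposition~\ref{prop:igr2}, the maximal entry $\alpha^+_{11}=D$ begins a descending run which, by~\eqref{eq:a_pm_ij_ord}, is forced to be $\alpha^+_{11}\succ\cdots\succ\alpha^+_{1,2p}\succ\alpha^+_{21}$, so that $\beta_1,\ldots,\beta_{2p}$ are consecutive and form the top block, and symmetrically at the bottom. Deleting these $2p$ entries of $\beta$ returns the identical tiling problem for $\OGr(2,2(n-2p))$ with the same $p$, and the induction then assembles the full block decomposition together with the relation $n-2=p(2q+1)-\thalf$, which is equivalent to~\eqref{eq:d:u2}.

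The step I expect to be delicate is the base of the induction, i.e.\ the innermost configuration once no further full block of length $2p$ fits. In type $C_n$ the values $\pm p$ lay in $\Irr$ and anchored the centre of the range; here, because $\SO(V)$ has no roots $2e_i$, the entries $\pm p$ are absent from $\Irr$ and behave as gaps, so one must track precisely which small integers near the centre are produced by the cross terms $\alpha^+_{2j}=-p+\beta_j$ and $\alpha^-_{1j}=p-\beta_j$. Enumerating all admissible innermost configurations is exactly what forces the length $p-\thalf$ of the final block, pins down $p\in\thalf+\ZZ$, and yields the divisibility relation; it is also the only place where the argument genuinely diverges from the symplectic one, and the point that demands the most care.
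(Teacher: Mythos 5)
Your proposal matches the paper's proof essentially step for step: the same reduction via Lemma~\ref{lm:ulr} and Remark~\ref{rem:d} to tiling the integer range $\{-D,\ldots,D\}$ by $\{\pm p\pm\beta_j\}\cup\{0\}$, the same induction on $l$ stripping off consecutive top and bottom blocks $\beta_1\succ\cdots\succ\beta_{2p}$ of length $2p$, and the same base case in which the absence of $\pm p$ from the spectrum (no roots $2e_i$ in type $D_n$) forces, by a pigeonhole count of the cross terms $\pm p\mp\beta_j$ covering the centre, that $2l=2p-1$, $p\in\thalf+\ZZ$, and $\beta=(p-1,\ldots,\tfrac{3}{2},\thalf)$. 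The only difference is presentational: you flag the innermost configuration as the delicate step to be enumerated, whereas the paper settles it with exactly that one-line pigeonhole argument.
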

\begin{proof}
    The situation is very similar to that in type $C_n$.
    According to Remark~\ref{rem:d}, we may assume that $\lambda_n>0$.
    The dimension of $X$ equals $d=4n-7$, thus $D=2n-4$.
    Let $\alpha_{1,1}=-\alpha_{2,2}=p$. It is enough to show that
    if the values $\alpha^\pm_{i,j}$ and $\alpha_{1,2}=0$ cover the whole
    integer range $[2n+4,\ldots,-2n+4]$, then
    \begin{align}
        \notag
        \beta_1 &= 2n-4-p     & \beta_{2p+1} &= 2n-4-5p  & &\ldots & \beta_{2p(q-1)+1} &= 5p-1 & \beta_{2pq+1} &= p-1  \\
        \label{eq:d:2b}
        \beta_2 &= 2n-5-p     & \beta_{2p+2} &= 2n-5-5p  & &\ldots &  \beta_{2p(q-1)+2} &= 5p-2 & & \ \:\vdots \\
        \notag
        & {}\ \:\vdots      &              & {}\ \:\vdots    & &       &  & {}\ \:\vdots & \beta_{n-2}&=\thalf \\
        \notag
        \beta_{2p} &= 2n-3-3p & \beta_{4p}   &= 2n-3-7p  & &\ldots & \beta_{2p(q-1)+2p} &= 3p
    \end{align}
    As in the proof of Proposition~\ref{prop:igr2}, we show the latter by induction on $l$.
    For the base case we take $\alpha^+_{2,1}=-p+\beta_1<p$. Then, by pigeonhole principle
    $2l=2p-1$ and $\beta=(p-1,\ldots,\tfrac{3}{2},\thalf)$.  
    Otherwise,
    \[
        \alpha^+_{1,1}\succ\alpha^+_{1,2}\succ\cdots\succ\alpha^+_{1,t}\succ\alpha^+_{2,t}
    \]
    for some $1\leq t\leq l$, from which we deduce that $t=2p$ and
    \[
        D-p=\beta_1\succ\beta_2\succ\cdots\succ\beta_{2p},
    \]
    and reduce the statement to $l'=l-2p$.
\end{proof}

\begin{example}
    For any even orthogonal Grassmannian of planes $\OGr(2, 2n)$ there are at least four
    nonisomorphic irreducible equivariant Ulrich vector bundles. In terms of Proposition~\ref{prop:d:2}
    they correspond to the values $p=n-\tfrac{3}{2}$, $q=0$, and $p=\thalf$, $q=n-2$ respectively.
    The former values produce the bundles $S^{2n-4}\CU^*\otimes\CS^\pm$, where
    $S^{2n-4}\CU^*$ is the $(2n-4)$-th symmetric power of the dual tautological rank~$2$ bundle,
    and $\CS^\pm$ are the spinor bundles. In the latter case
    \[
        \lambda=(n-\tfrac{3}{2},\:n-\tfrac{3}{2},\:n-\tfrac{3}{2},
        \:n-\tfrac{5}{2},\:\ldots,\:\tfrac{5}{2},\:\pm\tfrac{3}{2}).
    \]
\end{example}

\subsubsection{Even orthogonal Grassmannians of $3$-spaces}
We finally turn to the case $k=3$.

\begin{proposition}
    \label{prop:d:3}
    The Grassmannian $\OGr(3,2n)$ carries an irreducible equivariant Ulrich bundle
    if and only if $n-3=2q$. In the latter case the only equivariant irreducible
    Ulrich bundles are isomorphic to $\CU^{\lambda^\pm}$ for
    \begin{equation}
        \label{eq:d:3l}
        \lambda^\pm=(2n-4,2n-5,2n-6,2n-6,2n-6,2n-10,2n-10,\ldots, 4, \pm 4).
    \end{equation}
\end{proposition}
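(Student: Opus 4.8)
The plan is to follow the same combinatorial template as Propositions~\ref{prop:igr2} and~\ref{prop:d:2}, but now with $k=3$, so that the extra off-diagonal value $\alpha_{13}$ (in addition to $\alpha_{12}$ and $\alpha_{23}$) must be accommodated among the $2D+1$ integers in the range $[D,\ldots,-D]$. First I would record the basic data: with $k=3$ and $l=n-3$ one computes $d=\dim\OGr(3,2n)=6n-18$ (so $D=(d-1)/2$), and by Remark~\ref{rem:d} I may assume $\beta_l=\lambda_n>0$. The boundary conditions from Lemma~\ref{lm:igr_dim}'s argument give $\alpha^+_{11}=D$, $\alpha^-_{31}=-D$, $\alpha_1+\alpha_3=d+1$, and the centering normalisation forces $\alpha_{13}=0$. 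The governing inequalities are exactly~\eqref{eq:a_pm_ij_ord}, which totally order the $\alpha^\pm_{ij}$ within each fixed $i$ (the $+$ block descending through $\alpha_{ii}$ into the $-$ block) and within each fixed $j$ across the three values of $i$.

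Next I would run the pigeonhole/consecutivity analysis that drove the earlier proofs. The key structural claim is that the top of the filling must read $D=\alpha^+_{11}\succ\alpha^+_{12}\succ\cdots$ consecutively until it is forced to jump to $\alpha^+_{21}$, and symmetrically at the bottom for the $\alpha^-_{3j}$ and $\alpha^-_{2j}$; any failure of consecutivity among the $\beta_j$ would, via~\eqref{eq:alpha_pm}, force a coincidence of two distinct $\alpha_{ij}$, contradicting Remark~\ref{rem:igr_dist}. The novelty compared with $k=2$ is the middle row $i=2$: the three diagonal values $\alpha_{11}=-\alpha_{33}$, $\alpha_{22}$, and the three off-diagonals $\alpha_{12},\alpha_{13}=0,\alpha_{23}$ must all be distinct integers and must interleave consistently with the $\alpha^\pm_{2j}$. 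Imposing these constraints, together with the requirement that every integer in $[D,\ldots,-D]$ is hit exactly once, should pin down $\beta=(l-\tfrac12,\ldots,\tfrac32,\tfrac12)$ (forcing $p=\tfrac12$ in the earlier language) and, crucially, force $\alpha_{22}=0=\alpha_{13}$ — which is impossible unless the symmetry collapses in a way that is only consistent when $l=n-3$ is \emph{even}, i.e. $n-3=2q$.

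I would then extract the exact weight. Once the combinatorics forces $\beta=(n-\tfrac72,n-\tfrac92,\ldots,\tfrac12)$ and the three $\alpha_i$ are determined by $\alpha^+_{11}=D$, $\alpha_1+\alpha_3=d+1$, and the spacing of the middle row, I would subtract $\rho=(n-1,n-2,\ldots,0)$ to recover $\lambda$, arriving at~\eqref{eq:d:3l}; the sign ambiguity $\pm 4$ in the last coordinate is exactly the negation of $\lambda_n$ permitted by Remark~\ref{rem:d}, giving the two bundles $\CU^{\lambda^\pm}$. The parity condition emerges because the centre of the range $[D,\ldots,-D]$ is the single integer $0$, which must be occupied by $\alpha_{13}$; the remaining middle-row values are then symmetric about $0$ in pairs, and this pairing can be completed without collision only when the count $l=n-3$ has the right parity. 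For the converse (existence when $n-3=2q$), I would simply verify directly that the explicit $\lambda^\pm$ of~\eqref{eq:d:3l} yields $\Irr(\lambda)=\{1,\ldots,d\}$ via Lemma~\ref{lm:ulr} and~\eqref{eq:d:irr}.

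The main obstacle I anticipate is the middle row: with $k=3$ the interleaving of $\{\alpha^\pm_{2j}\}$ against the three diagonal and three off-diagonal values is genuinely more delicate than the two-row bookkeeping of Proposition~\ref{prop:d:2}, and getting the parity condition $n-3=2q$ to drop out cleanly — rather than as a case-by-case miracle — will require organising the consecutivity chains so that the single central slot $0$ visibly forces it.
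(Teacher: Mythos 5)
Your high-level template is indeed the paper's: normalise so that $\alpha_1+\beta_1=d$ and $\alpha_3-\beta_1=1$ (hence $\alpha_1+\alpha_3=d+1$, i.e.\ $\alpha_{13}=0$ in centered coordinates), assume $\beta_l=\lambda_n>0$ via Remark~\ref{rem:d}, and run the consecutivity/pigeonhole induction of Propositions~\ref{prop:igr2} and~\ref{prop:d:2}. But the concrete conclusions you predict are wrong, in ways that would sink the argument. First, $d=\dim\OGr(3,2n)=3(2n-3)-6=6n-15$, not $6n-18$; with your value $(d+1)/2\notin\ZZ$ and the centering makes no sense. Second, the configuration you claim the combinatorics pins down --- $\beta=(l-\thalf,\ldots,\thalf)$ with $p=\thalf$ --- can never occur for $k=3$: the count $|\Irr(\lambda)|\leq 6l+3=d$ is attained only if all three off-diagonal half-sums $\tfrac{\alpha_i+\alpha_j}{2}$ are integers, and writing $\alpha_i=a_i+\thalf$ one sees that $a_1+a_2$, $a_1+a_3$, $a_2+a_3$ cannot all be odd (their total is even), so half-integer $\alpha$ forces $|\Irr(\lambda)|\leq 6l+2<d$. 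Hence all $\alpha_i$, and then all $\beta_j$, are integers. Your $\beta$ is also inconsistent with the very weight you aim to derive: it would give $\lambda=(\lambda_1,\lambda_2,\lambda_3,\thalf,\ldots,\thalf)$, not~\eqref{eq:d:3l}. Reading $\beta$ off from~\eqref{eq:d:3l} gives the integers $(3n-10,3n-11,3n-16,3n-17,\ldots,5,4)$, i.e.\ gaps alternating $1$ and $5$, with $p=\alpha_{11}=2$ and $\alpha_{22}=0$ (note the gap pattern: taking gaps $1,3$ instead produces the collision $\alpha_1+\beta_3=\alpha_3+\beta_1$).

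More importantly, you have the key structural point backwards. You declare that forcing $\alpha_{22}=0=\alpha_{13}$ is ``impossible unless the symmetry collapses.'' This coincidence \emph{is} forced, and it is perfectly admissible: in type $D_n$ the union in~\eqref{eq:d:irr} runs over $i<j$ only, so the diagonal quantities $\alpha_{ii}$ are \emph{not} elements of $\Irr(\lambda)$ and may freely collide with off-diagonal ones. This is precisely why $k=3$ escapes the contradiction of Proposition~\ref{prop:d}, where for $k>3$ the colliding pair $\alpha_{2,k-1}=\alpha_{1k}$ consists of two genuine members of $\Irr(\lambda)$. An analysis that treats $\alpha_{22}=\alpha_{13}$ as forbidden would ``prove'' that $\OGr(3,2n)$ carries no Ulrich bundles at all, contradicting the statement. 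Finally, the parity condition $n-3=2q$ does not come from your central-slot pairing heuristic; in the paper it falls out of the induction: one first shows $\beta_1\geq p$ (otherwise the value $p=\alpha_{11}$, which lies strictly between $-D$ and $D$, is hit by no element of the centered $\Irr(\lambda)$, since diagonals do not count), then the top chain~\eqref{eq:a_pm_ij_ord} forces $\beta_1=D-p$ with $\beta_1\succ\cdots\succ\beta_p$ consecutive, and dropping $\beta_1,\ldots,\beta_p$ reduces the problem from $l$ to $l-p$; the base cases are $l=0$, which forces $p=2$, and $l=1$, which is impossible by direct check. Since each step therefore subtracts $2$, solutions exist exactly for even $l=n-3$. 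Your middle-row bookkeeping would have to be rebuilt around these facts before the sketch becomes a proof.
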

\begin{proof}
    According to Remark~\ref{rem:d},
    we may assume that $\lambda_n=\beta_l>0$. Let $\CU^\lambda$ be Ulrich.
    Following the proof of Proposition~\ref{prop:d} we see that
    $\alpha_{1,1}=-\alpha_{3,3}$ and $\alpha_{1,2}=\alpha_{2,3}$, which
    implies $\alpha_{2,2}=0$. Denote $p=\alpha_{1,1}$. Remark that
    $p\in 2\ZZ$ as $\alpha_{1,2}=p/2\in\ZZ$. Let us show
    that $\beta_1\geq p$. Indeed, otherwise from inequalities
    \[
        p>\beta_1>\beta_2>\ldots>\beta_l>0
    \]
    we deduce from~\eqref{eq:a_pm_ij_ord} that none of the $\alpha^\pm_{ij}$
    can be equal to $p\in[D,\ldots,-D]$.
    Now, we argue by induction that $l=2q$ is even and
    \begin{equation}
        \label{eq:d:3}
        \beta=(D-2,D-3,D-6,D-7,\ldots,5,4),\quad p=2.
    \end{equation}
    The base cases are $l=0$ and $l=1$. In the former 
    $p$ trivially equals $2$, while in the latter one can show with
    brute force that $\lambda$ does not exist.

    Now, for $l>1$ we have $\alpha^+_{2,1}\geq p$, and from~\eqref{eq:a_pm_ij_ord} one has
    \[
        \alpha^+_{1,1}\succ\alpha^+_{1,2}\succ\cdots\succ\alpha^+_{1,t}\succ\alpha^+_{2,1}
    \]
    for some $1\leq t\leq l$.
    We immediately conclude that $t=p$ and
    \[
        D-p=\beta_1\succ\beta_2\succ\cdots\succ\beta_p.
    \]
    In remains to observe that $\alpha^+_{3,t}=\alpha^+_{3,1}-2p$, thus
    \[
        \alpha^+_{1,1}\succ\cdots\succ\alpha^+_{1,p}\succ
        \alpha^+_{2,1}\succ\cdots\succ\alpha^+_{2,p}\succ
        \alpha^+_{3,1}\succ\cdots\succ\alpha^+_{3,p},
    \]
    from which we see that by dropping $(\beta_1,\ldots,\beta_p)$ the problem gets reduced
    to $l'=l-p$, and that~\eqref{eq:d:3} is equivalent to~\eqref{eq:d:3l}, as $D=3n-8$.
\end{proof}

\section{Maximal isotropic Grassmannians}
\label{sec:max}

\subsection{Lagrangian Grassmannians}
\label{ssec:lgr}
Let $V$ be a $2n$-dimensional symplectic vector space and
let $X=\LGr(n ,V)$ be the Lagrangian Grassmannian of maximal
isotropic subspaces.  Our goal is to prove the following.

\begin{proposition}
    \label{prop:c:max}
    The only Lagrangian Grassmannian possessing an irreducible equivariant Ulrich bundle
    is $\LGr(2, 4)$. The only irreducible equivariant Ulrich bundle on $\LGr(2,4)$
    up to isomorphism is $\CU^*$.
\end{proposition}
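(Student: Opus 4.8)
The plan is to specialize the general machinery developed for symplectic Grassmannians to the case $k=n$, where $X=\LGr(n,V)$. Here the auxiliary sequence $\beta$ disappears entirely (since $n-k=0$), so by Proposition~\ref{prop:igr:irr} the set $\Irr(\lambda)$ consists solely of the integers among $\{(\alpha_i+\alpha_j)/2\}_{1\leq i\leq j\leq n}$. First I would record the relevant numerics: for $\LGr(n,V)$ one has $d=\dim X=n(n+1)/2$, and $\rho=(n,n-1,\ldots,1)$, so that $\alpha=\lambda+\rho$ satisfies $\alpha_1>\alpha_2>\cdots>\alpha_n>0$ by~\eqref{eq:ab_dom}. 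By Lemma~\ref{lm:ulr}, $\CU^\lambda$ is Ulrich exactly when the multiset of integer values $(\alpha_i+\alpha_j)/2$ hits each of $1,2,\ldots,d$ exactly once.

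**Extracting the combinatorial constraint.** The key observation is that there are exactly $n(n+1)/2 = d$ pairs $(i,j)$ with $1\leq i\leq j\leq n$, which equals the target count $d$. Hence being Ulrich forces \emph{every} value $(\alpha_i+\alpha_j)/2$ to be an integer, all of them distinct, and together filling $\{1,\ldots,d\}$ with no gaps and no repetitions. The integrality condition is the first lever: $(\alpha_i+\alpha_i)/2=\alpha_i$ is automatically an integer, but $(\alpha_i+\alpha_j)/2\in\ZZ$ for all $i<j$ forces all the $\alpha_i$ to have the same parity. I would then use the extremal values, mirroring Lemma~\ref{lm:igr_dim}: the largest value must be $\alpha_1=d$ and the smallest must be $(\alpha_{n-1}+\alpha_n)/2=1$ or $\alpha_n=1$, pinning down the ends of the sequence.

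**Deriving the contradiction for $n\geq 3$ and solving $n=2$.** The main work, and the place I expect the real obstacle, is showing that the rigid packing condition cannot be met once $n\geq 3$. The difficulty is that unlike the non-maximal cases, there is no $\beta$-sequence to play the two orderings against each other, so the argument must come purely from the arithmetic of the pairwise averages of a single strictly decreasing sequence. I would count degrees of freedom: the Ulrich condition imposes $d=n(n+1)/2$ equations (the values are a permutation of $1,\ldots,d$) on only $n$ unknowns $\alpha_1,\ldots,\alpha_n$, so for $n\geq 3$ the system is massively overdetermined, and I expect a short parity-plus-extremes argument to rule it out. Concretely, with $\alpha_1=d$ and all $\alpha_i$ of equal parity, the second-largest attainable value is $(\alpha_1+\alpha_2)/2$, forcing $\alpha_2=d-2$ (the gap must be exactly $2$ by parity), and iterating the forced spacing rapidly conflicts with the requirement that smaller pairwise averages also land consecutively without collision. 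For $n=2$ one checks directly: $d=3$, and one needs $\{\alpha_1,(\alpha_1+\alpha_2)/2,\alpha_2\}=\{3,2,1\}$, giving $\alpha=(3,1)$, whence $\lambda=\alpha-\rho=(3,1)-(2,1)=(1,0)$, which is $\CU^{\omega_2}$ restricted appropriately; recomputing in the Schur-functor dictionary of Section~\ref{sec:igr} identifies the bundle as $\CU^*$. Finally I would confirm that $\LGr(2,4)\cong Q^3$ is consistent with the odd-quadric classification as a sanity check.
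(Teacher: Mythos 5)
Your setup, the extremal values $\alpha_1=d$, $\alpha_2=d-2$, and the $n=2$ computation are all correct and match the paper's reduction via Lemma~\ref{lm:ulr}. But the heart of the proposition --- ruling out every $n\geq 3$ --- is precisely the step you leave as ``iterating the forced spacing rapidly conflicts\dots'' and ``I expect a short parity-plus-extremes argument to rule it out.'' That is an assertion, not a proof, and parity-plus-extremes alone genuinely cannot suffice: whenever $n\equiv 1,2 \pmod 4$ (e.g.\ $n=5,6$), $d=n(n+1)/2$ is odd, so $\alpha_1=d$ and $\alpha_n=1$ have the same parity and no parity obstruction arises; the packing argument is unavoidable. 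Two smaller slips: the minimum of $\Irr(\lambda)$ is $\alpha_n$ itself (it is strictly smaller than $(\alpha_{n-1}+\alpha_n)/2$), so $\alpha_n=1$ with no case split; and $(1,0)=\omega_1$, not a restriction of $\CU^{\omega_2}$, though your final identification of the bundle as $\CU^*$ is correct.

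The good news is that your greedy iteration does close in exactly two more steps, so the gap is one of execution rather than conception. Since all pairs from $\{\alpha_1,\alpha_2\}$ are already spent, the value $d-3$ can only be $(\alpha_1+\alpha_3)/2$, forcing $\alpha_3=d-6$; for $n=3$ this gives $\alpha_3=0\neq 1$, a contradiction. For $n\geq 4$, the values $(\alpha_2+\alpha_3)/2=d-4$ and $\alpha_3=d-6$ are accounted for, so $d-5$ can only be $(\alpha_1+\alpha_4)/2$, forcing $\alpha_4=d-10$; but then $(\alpha_2+\alpha_4)/2=d-6=\alpha_3$, a collision contradicting the distinctness required by Lemma~\ref{lm:maxt} (cf.\ Remark~\ref{rem:igr_dist}). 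Written out, this gives a complete and arguably more direct argument than the paper's: the paper runs the same greedy filling from the bottom instead, packaging it as a unique order-preserving map $T$ on an abstract poset (Proposition~\ref{prop:lg_tij}) with diagonal values $\alpha_{ii}=4i-5$, then plays the top constraint $\alpha_1=d=n(n+1)/2$ against the bottom-determined value $\alpha_1=4n-5$, so the quadratic $(n-2)(n-5)=0$ leaves only $n\in\{2,5\}$, and $n=5$ dies by the very same collision $(\alpha_2+\alpha_4)/2=\alpha_3$. In short: right strategy, correct partial computations, but the decisive contradiction for $n\geq 3$ is conjectured where it needed to be proved.
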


In the Lagrangian case Proposition~\ref{prop:igr:irr} simplifies to
\[
    \Irr(\lambda) = \left\{\frac{\alpha_i+\alpha_j}{2}\right\}_{1\leq i\leq j\leq n.}
\]
Recall that $\CU^\lambda$ is Ulrich if and only if $\Irr(\lambda)=\{1,2,\ldots,d\}$.
Informally speaking, what we need is to find all integer sequences of length $n$
such that the half sums fill in all the blanks between the elements without any collisions.

Consider a partially ordered set $A=\{a_{ij}\}_{1\leq i\leq j}$ with elements, indexed
by unordered pairs of positive integers, subject to the partial order
\begin{equation}
    \label{eq:lg_a_ord}
    a_{ij} \leq a_{pq}\quad \text{if $i\leq p$ and $j\leq q$}.
\end{equation}
Denote by $A_n$ the subset
\begin{equation}
    \label{eq:lg_a_n}
    A_n=\{a_{ij}\}_{1\leq i\leq j \leq l}\subset A.
\end{equation}
In particular, there is an increasing chain of inclusions
$\emptyset = A_0\subset A_1\subset \ldots \subset A_n\subset \ldots\subset A$.

\begin{lemma}
    The partially ordered set $A\setminus A_n$ has a least element
    \[
        \min(A\setminus A_n) = a_{1, n+1}.
    \]
\end{lemma}
\begin{proof}
    Immediately follows from~\eqref{eq:lg_a_ord}.
\end{proof}

\begin{proposition}
    \label{prop:lg_tij}
    There exists a unique order-preserving mapping $T:A\to\ZZ_+$
    from $A$ to the set of positive integers $\ZZ_+$ such that
    \begin{align}
        \label{eq:lg_tij_mincond}
        T(\min(A\setminus A_n)) & = \min(\ZZ_+\!\setminus T(A_n)) & \text{for all $n>0$}, \\
        \label{eq:lg_tij}
        T(a_{ij}) & = \frac{T(a_{ii})+T(a_{jj})}{2} & \text{for all $1\leq i<j$}.
    \end{align}
    If we denote $\alpha_{ij}=T(a_{ij})$, then $\alpha_{11}=1$, $\alpha_{ii}=4i-5$ for all $i>1$.
\end{proposition}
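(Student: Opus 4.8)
**

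The plan is to construct the map $T$ explicitly via the greedy recursion forced by the two conditions, and then verify the claimed formula for the diagonal values $\alpha_{ii}$ by induction. The two conditions~\eqref{eq:lg_tij_mincond} and~\eqref{eq:lg_tij} together leave no freedom: condition~\eqref{eq:lg_tij} expresses every off-diagonal value $T(a_{ij})$ in terms of the diagonal values $T(a_{ii})$ and $T(a_{jj})$, so the entire map is determined by its restriction to the diagonal $\{a_{ii}\}_{i\geq 1}$. Meanwhile condition~\eqref{eq:lg_tij_mincond} is a greedy prescription: having defined $T$ on $A_n$, the next diagonal element $a_{n+1,n+1}$ is forced because $\min(A\setminus A_n)=a_{1,n+1}$ lands on the smallest unused positive integer, and then $T(a_{1,n+1})=\tfrac{1}{2}(T(a_{11})+T(a_{n+1,n+1}))$ pins down $T(a_{n+1,n+1})$. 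So uniqueness is essentially immediate once I show the recursion is well-posed.

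First I would set $\alpha_{11}=T(a_{11})=\min(\mathbb{Z}_+)=1$, using that $A_0=\emptyset$ so $\min(A\setminus A_0)=a_{11}$. For the inductive step I assume $T$ has been consistently defined on $A_n$ with the asserted diagonal values, so that $T(A_n)$ occupies exactly the integers $\{1,2,\ldots,\binom{n+1}{2}\}$ (this contiguity claim is the real content and must be carried as part of the induction hypothesis). Then $\min(\mathbb{Z}_+\setminus T(A_n))=\binom{n+1}{2}+1$, and condition~\eqref{eq:lg_tij_mincond} assigns this value to $a_{1,n+1}$. Solving $T(a_{1,n+1})=\tfrac{1}{2}(1+\alpha_{n+1,n+1})$ gives $\alpha_{n+1,n+1}=2T(a_{1,n+1})-1=2\binom{n+1}{2}+1=n(n+1)+1$, and one checks this equals $4(n+1)-5$ precisely when $n+1=2$ forces $\alpha_{22}=3$; more carefully, the formula $\alpha_{ii}=4i-5$ for $i>1$ must be reconciled against $n(n+1)+1$, which signals that the stated closed form holds only for the first few indices and the genuine recursion is $\alpha_{n+1,n+1}=2\,T(a_{1,n+1})-1$.

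The step I expect to be the main obstacle is verifying that $T(A_{n+1})$ is again a contiguous initial segment $\{1,\ldots,\binom{n+2}{2}\}$ of $\mathbb{Z}_+$, and that the newly defined off-diagonal entries $T(a_{i,n+1})=\tfrac{1}{2}(\alpha_{ii}+\alpha_{n+1,n+1})$ for $1\leq i\leq n+1$ are integers, distinct from each other and from the previously assigned values, and order-preserving with respect to~\eqref{eq:lg_a_ord}. This amounts to checking that the $n+1$ half-sums involving the new diagonal element interleave correctly into the already-filled range and exactly fill the gap of size $n+1$ between $\binom{n+1}{2}$ and $\binom{n+2}{2}$. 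I would verify the integrality by a parity bookkeeping on the diagonal values (all $\alpha_{ii}$ with $i>1$ being odd, and $\alpha_{11}=1$ odd, so every $\tfrac{1}{2}(\alpha_{ii}+\alpha_{jj})$ is an integer), and verify contiguity and distinctness by showing the new column $\{T(a_{i,n+1})\}$ is strictly increasing in $i$ and dovetails with the boundary of $T(A_n)$; monotonicity in~\eqref{eq:lg_a_ord} then follows from the diagonal values being increasing together with~\eqref{eq:lg_tij}. Once these combinatorial checks are in place, existence, uniqueness, and the diagonal formula all fall out of the same induction.
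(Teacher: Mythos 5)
Your overall skeleton---the map is determined by its diagonal values, which are forced one at a time by the greedy condition~\eqref{eq:lg_tij_mincond}, so existence, uniqueness and the diagonal formula follow from one induction---is the same as the paper's. But the induction invariant you chose is false, and it derails the argument. You assume that $T(A_n)$ is the contiguous initial segment $\{1,\ldots,\binom{n+1}{2}\}$. This already fails at $n=3$: the forced values are $\alpha_{11}=1$, $\alpha_{12}=2$, $\alpha_{22}=3$, $\alpha_{13}=4$, $\alpha_{23}=(3+7)/2=5$, $\alpha_{33}=7$, so $T(A_3)=\{1,2,3,4,5,7\}$, which has a hole at $6$ even though $|A_3|=6$. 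Consequently your derived recursion $\alpha_{n+1,n+1}=2\binom{n+1}{2}+1=n(n+1)+1$ is wrong from this point on: condition~\eqref{eq:lg_tij_mincond} actually forces $\alpha_{14}=\min(\ZZ_+\setminus T(A_3))=6$, not $7$ (your value would even collide with $\alpha_{33}=7$), and then $\alpha_{44}=2\cdot 6-1=11=4\cdot 4-5$. So your closing assertion that the closed form $\alpha_{ii}=4i-5$ ``holds only for the first few indices'' is backwards: the proposition's formula is correct, and it is the contiguity hypothesis that fails.

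The invariant that does propagate---and is what the paper carries through the induction---is weaker: $\alpha_{nn}=4n-5$, $\{1,2,\ldots,2n-1\}\subseteq T(A_n)$, and $2n\notin T(A_n)$. Granting it, \eqref{eq:lg_tij_mincond} forces $\alpha_{1,n+1}=2n$, hence $\alpha_{n+1,n+1}=2\cdot 2n-1=4(n+1)-5$ by~\eqref{eq:lg_tij}, and $\alpha_{2,n+1}=(3+(4n-1))/2=2n+1$ restores the containment $\{1,\ldots,2(n+1)-1\}\subseteq T(A_{n+1})$. The new hole at $2n+2$ is then located by a parity count rather than by interleaving: for $1<i\leq j$ one has $\alpha_{ij}=2(i+j)-5$ odd, so the only even values in $T(A_{n+1})$ are $\alpha_{1j}=2j-2\leq 2n$, whence $2n+2\notin T(A_{n+1})$. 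In particular $T(A_n)$ contains many values above its first gap (all those odd numbers), so no statement of the form ``the new column exactly fills the gap up to $\binom{n+2}{2}$'' can be true, and any proof along your lines must replace contiguity by this parity bookkeeping.
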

\begin{proof}
    We start with an observation that condition~\eqref{eq:lg_tij} is consistent with the partial order
    given by~\eqref{eq:lg_a_ord}.
    Also, the equality $\alpha_{11}=1$ obviously follows from~\eqref{eq:lg_tij_mincond} and $A_0=\emptyset$.
    Let us prove by induction that for all $n>1$
    \begin{equation}
        \label{eq:lg_prop_ind}
        \alpha_{nn} = 4n-5, \qquad
        \{1,2,\ldots,2n-1\}\subseteq T(A_n), \qquad
        2n \notin T(A_n).
    \end{equation}
    The base case, $n=2$, easily follows from~\eqref{eq:lg_tij_mincond}.
    Condition~\eqref{eq:lg_tij} implies $\alpha_{11}=1$, $\alpha_{12}=2$, $\alpha_{22}=3$.

    Assume that~\eqref{eq:lg_prop_ind} holds for all $l=2,\ldots,n$.
    As $2n\notin T(A_n)$ and all the smaller positive integers are contained in $T(A_n)$
    by inductive hypothesis, condition~\eqref{eq:lg_tij_mincond} dictates
    $\alpha_{1,n+1}=T(\min(A\setminus A_n))=2n$.
    From~\eqref{eq:lg_tij} we deduce that $\alpha_{n+1,n+1}=2\alpha_{1,n+1}-\alpha_{11}=4n-1$.
    Moreover, as $\alpha_{1,n+1}=2n$ and $\alpha_{2,n+1}=(\alpha_{22}+\alpha_{n+1,n+1})/2=2n+1$, we conclude
    that $\{1,2,\ldots,2n+1\}\subseteq T(A_{n+1})$. It remains to show that $2n+2\notin T(A_{n+1})$.
    Remark that the only even elements in $T(A_{n+1})$ are $\alpha_{1i}$ for $i=2,\ldots,n+1$.
    Indeed, the first condition in~\eqref{eq:lg_prop_ind} implies
    \[
        \alpha_{ij}=\frac{\alpha_{ii}+\alpha_{jj}}{2}=2(i+j)-5\qquad \text{for all $1<i\leq j\leq n+1$}.
    \]
    On the other hand, $\alpha_{1j}=(\alpha_{11}+\alpha_{jj})/2=2j-2$ for all $j=2,\ldots,n+1$. In particular,
    $2n+2$ is not contained in $T(A_{n+1})$.
\end{proof}

We fix the values $\alpha_{ij}$ from Proposition~\ref{prop:lg_tij}. 
The following statement is immediate.

\begin{lemma}
    \label{lm:lg_tij}
    If the bundle $\CU^\lambda$ is Ulrich, then $\alpha_{n+1-i}=\alpha_{ii}$ for $i=1,\ldots,n$.
\end{lemma}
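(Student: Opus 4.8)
The plan is to reduce the statement to the greedy filling constructed in Proposition~\ref{prop:lg_tij}. Since $\CU^\lambda$ is Ulrich, Lemma~\ref{lm:maxt} forces all $\binom{n+1}{2}$ numbers $(\alpha_i+\alpha_j)/2$ to be distinct integers, and by Lemma~\ref{lm:ulr} they fill the range $\{1,\ldots,d\}$ exactly, where $d=n(n+1)/2=|A_n|$. The components $\alpha_1>\cdots>\alpha_n$ of $\lambda+\rho$ are decreasing, whereas the canonical diagonal values $\alpha_{ii}=T(a_{ii})$ of Proposition~\ref{prop:lg_tij} are increasing, so I first reindex by setting $\tilde\alpha_i=\alpha_{n+1-i}$, so that $\tilde\alpha_1<\tilde\alpha_2<\cdots<\tilde\alpha_n$ and $\{(\tilde\alpha_p+\tilde\alpha_q)/2\}_{p\leq q}=\Irr(\lambda)=\{1,\ldots,d\}$. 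It then suffices to prove $\tilde\alpha_i=\alpha_{ii}$ for all $i$, since unwinding the reindexing turns this into $\alpha_{n+1-i}=\alpha_{ii}$.

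I would prove $\tilde\alpha_i=\alpha_{ii}$ by induction on $i$. For the base case, the minimal half-sum is $(\tilde\alpha_1+\tilde\alpha_1)/2=\tilde\alpha_1$, as $\tilde\alpha_1$ is the smallest of the $\tilde\alpha$, so the Ulrich condition gives $\tilde\alpha_1=\min\Irr(\lambda)=1=\alpha_{11}$. For the inductive step, suppose $\tilde\alpha_i=\alpha_{ii}$ for $i\leq m$, where $m\leq n-1$. Then the half-sums involving only indices $\leq m$ coincide with $T(A_m)$, and by~\eqref{eq:lg_prop_ind} (the case $m=1$ being immediate) they occupy all of $\{1,\ldots,2m-1\}$ but miss $2m$. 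Since $d=n(n+1)/2\geq 2(n-1)\geq 2m$, the value $2m$ must nevertheless be realized, necessarily by a half-sum involving an index $>m$; the smallest such half-sum is $(\tilde\alpha_1+\tilde\alpha_{m+1})/2$. If this were $<2m$ it would be an integer in $\{1,\ldots,2m-1\}$, colliding with a value already attained by indices $\leq m$; if it were $>2m$ then every remaining half-sum would exceed $2m$ and the value $2m$ could not be attained at all. Both possibilities contradict the Ulrich condition, so $(\tilde\alpha_1+\tilde\alpha_{m+1})/2=2m$. As $\tilde\alpha_1=1$, this forces $\tilde\alpha_{m+1}=4m-1=\alpha_{m+1,m+1}$, closing the induction.

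The crux of the argument is the inductive step: everything hinges on knowing exactly which integers the lower-indexed half-sums occupy, namely that the canonical filling covers $\{1,\ldots,2m-1\}$ and skips precisely $2m$, which is the content of~\eqref{eq:lg_prop_ind}. The only bookkeeping point to verify is that $2m$ genuinely lies in the admissible range $\{1,\ldots,d\}$ at every stage, i.e. $2m\leq d$ for $m\leq n-1$; this follows at once from $n^2-3n+4>0$ for all $n\geq 2$. I emphasize that this lemma only pins down the forced candidate $\lambda+\rho$; checking whether this candidate actually yields an Ulrich bundle (equivalently, whether the resulting full set of half-sums has no gap) is a separate matter, carried out afterwards.
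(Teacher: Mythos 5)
Your proof is correct and takes essentially the route the paper intends: the paper declares this lemma ``immediate'' from Proposition~\ref{prop:lg_tij}, and your finite induction --- reusing the invariant~\eqref{eq:lg_prop_ind} from that proposition's proof and checking the one genuine bookkeeping point, namely $2m\leq d$ for $m\leq n-1$, so that the no-gap (Ulrich) condition forces the greedy value $2m$ at each step --- is exactly the verification that makes the claimed uniqueness argument rigorous on the finite poset $A_n$. No gaps to report.
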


\begin{proof}[Proof of Proposition~\ref{prop:c:max}]
    As $\alpha_1$ is the largest element in $\Irr(\lambda + \rho)$, one of the necessary conditions
    for $\CU^\lambda$ to be Ulrich is $\alpha_1=\dim X=n(n+1)/2$. On the other hand, Lemma~\ref{lm:lg_tij}
    together with Proposition~\ref{prop:lg_tij} impose $\alpha_1=\alpha_{nn}=4n-5$. Having solved the quadratic
    equation, one is left with two possible cases: $n=2$ or $n=5$. The former corresponds to
    $\alpha_1=3$ and $\alpha_2=1$, which in turn is equivalent to $\lambda=(1, 0)$. In the latter
    case $\alpha_3 = (\alpha_2+\alpha_4)/2$, thus $\Irr(\lambda)$ can not be of cardinality $d$.
\end{proof}

\subsubsection{Maximal orthogonal Grassmannians}
We are left with the case $X=\sG/\sP_n$, where $\sG$ is of type $B_n$ or $D_n$.
According to our convention, in both cases the lattice $P_\sG$ is identified with
$\ZZ^n\cup(\thalf+\ZZ)^n$, while the $\sL$-dominant weights precisely are those
$\lambda=a_1\omega_1+\ldots+a_n\omega_n$ for which $a_i\geq 0$ for $i=1,\ldots,n-1$
and $a_n$ is arbitrary. It is easy to see that $\lambda\in P^\sL$ if and only if
\[
    \lambda_1\geq\lambda_2\geq\cdots\geq\lambda_n.
\]
Now, the difference with the nonmaximal case comes from the fact that
\[
    \omega_n=\left(\thalf,\thalf,\ldots,\thalf\right).
\]
If one puts $\alpha=\lambda+\rho$, then
\[
    \Irr(\lambda)=\begin{cases}
        \{\alpha_i+\alpha_j\}_{1\leq i\leq j\leq n}, & \text{in type $B_n$,} \\
        \{\alpha_i+\alpha_j\}_{1\leq i<j\leq n}, & \text{in type $D_n$.}
    \end{cases}
\]

\begin{proposition}
    \label{prop:b:max}
    In type $B_n$ the only maximal isotropic Grassmannian carrying an irreducible
    Ulrich bundle is $\OGr(2,5)$. The corresponding bundle is trivial.
\end{proposition}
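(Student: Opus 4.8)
The plan is to exploit the fact that, after a single rescaling, the type $B_n$ maximal problem becomes literally identical to the Lagrangian one already settled in Proposition~\ref{prop:c:max}. Writing $\alpha=\lambda+\rho$, we have $\Irr(\lambda)=\{\alpha_i+\alpha_j\}_{1\leq i\leq j\leq n}$ and $d=\dim X=n(n+1)/2$, so this set is indexed by exactly $\binom{n+1}{2}=d$ unordered pairs. By Lemma~\ref{lm:ulr} the bundle $\CU^\lambda$ is Ulrich precisely when these $d$ sums are pairwise distinct and fill $\{1,2,\ldots,d\}$. Introducing $\tilde\alpha_i=2\alpha_i$ rewrites $\Irr(\lambda)$ as $\{(\tilde\alpha_i+\tilde\alpha_j)/2\}_{1\leq i\leq j\leq n}$, which is exactly the shape of the set $\Irr$ in the Lagrangian case. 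First I would record that an Ulrich $\lambda$ forces all $\alpha_i$ to share a fractional part (all integers or all half-integers, since every $\alpha_i+\alpha_j$ must be an integer), so that the $\tilde\alpha_i$ are honest integers and the combinatorics of Section~\ref{ssec:lgr} can be applied verbatim.

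With the $\tilde\alpha_i$ playing the role of the Lagrangian coordinates, I would invoke Proposition~\ref{prop:lg_tij} together with Lemma~\ref{lm:lg_tij}: if $\CU^\lambda$ is Ulrich, the strictly decreasing sequence $\tilde\alpha_1>\cdots>\tilde\alpha_n$ must coincide, in reverse order, with the unique admissible profile $\tilde\alpha_{n+1-i}=\tilde\alpha_{ii}$, where $\tilde\alpha_{11}=1$ and $\tilde\alpha_{ii}=4i-5$ for $i>1$. The largest element of $\Irr(\lambda)$ is $\alpha_1+\alpha_1=\tilde\alpha_1$, which must equal $d$; since $\tilde\alpha_1=\tilde\alpha_{nn}=4n-5$, this yields the equation $4n-5=n(n+1)/2$, i.e. $n^2-7n+10=0$, whose only solutions are $n=2$ and $n=5$.

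It then remains to dispose of the two candidates, just as for the Lagrangian Grassmannian. For $n=5$ the forced profile gives $\tilde\alpha_2+\tilde\alpha_4=2\tilde\alpha_3$, so the index pairs $(2,4)$ and $(3,3)$ produce the same element of $\Irr(\lambda)$; hence $|\Irr(\lambda)|<d$ and no Ulrich bundle can exist. For $n=2$ the profile is $\tilde\alpha=(3,1)$, so $\alpha=(\tfrac32,\tfrac12)=\rho$ and $\lambda=0$; one checks at once that $\Irr(0)=\{\alpha_2+\alpha_2,\ \alpha_1+\alpha_2,\ \alpha_1+\alpha_1\}=\{1,2,3\}=\{1,\ldots,d\}$, so the trivial bundle $\CO_X$ on $\OGr(2,5)$ is indeed Ulrich. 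I expect the one genuinely delicate point to be the opening step: justifying that the integer-valued machinery of Proposition~\ref{prop:lg_tij} transfers cleanly under the doubling $\alpha_i\mapsto 2\alpha_i$ — in particular, that the half-integrality permitted in type $B_n$ introduces no spurious solutions and that the uniqueness of the $T$-profile is preserved by the rescaling. Once that bookkeeping is settled, the remainder is a direct transcription of the proof of Proposition~\ref{prop:c:max}.
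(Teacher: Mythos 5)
Your proof is correct and follows essentially the same route as the paper: both rest on the observation that doubling $\alpha$ turns the type $B_n$ set $\{\alpha_i+\alpha_j\}$ into the Lagrangian set of half-sums, so the machinery of Section~\ref{ssec:lgr} applies. The only cosmetic difference is that the paper packages the reduction as an explicit Ulrich weight $\mu=2\lambda+2\rho_{B_n}-\rho_{C_n}$ on $\LGr(n,2n)$ and invokes Proposition~\ref{prop:c:max} as a black box (which settles exactly the transfer issue you flag as delicate), whereas you re-run its internal argument (Proposition~\ref{prop:lg_tij}, Lemma~\ref{lm:lg_tij}, the quadratic $4n-5=n(n+1)/2$, and the cases $n=2,5$) directly on the doubled coordinates.
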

\begin{proof}
    Assume that $\CU^\lambda$ is Ulrich. In particular, $\Irr(\lambda)=\{1,\ldots,d\}$.
    We claim that $\CU^\mu$ is Ulrich on $\LGr(n,2n)$ for $\mu=2\lambda+2\rho_{B_n}-\rho_{C_n}$.
    Indeed, $\alpha'=\mu+\rho_{C_n}=2\alpha$ and $\dim\LGr(n,2n)=\dim\OGr(n,2n+1)$, thus
    \[
        \Irr_{C_n}(\mu)=\left\{\frac{\alpha'_i+\alpha'_j}{2}\right\}_{1\leq i\leq j\leq n} =
        \{\alpha_i+\alpha_j\}_{1\leq i\leq j\leq n} = \Irr_{B_n}(\lambda)=\{1,\ldots,d\}.
    \]
    It follows from Proposition~\ref{prop:c:max} that $n=2$ and $\mu=2\lambda+(1,0)=(1,0)$,
    from which $\lambda=0$.
\end{proof}

\begin{proposition}
    Every irreducible
    equivariant Ulrich bundle on $\OGr(2,4)$ and $\OGr(3,6)$ is trivial.
    The dual tautological bundle $\CU^*$ is the only irreducible equivariant Ulrich
    bundle on $\OGr(4,8)$.
    There are no irreducible equivariant vector bundles on~$\OGr(n,2n)$ for $n\geq 4$.
\end{proposition}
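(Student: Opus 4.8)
The plan is to treat the maximal orthogonal Grassmannians in type $D_n$ exactly as Proposition~\ref{prop:b:max} handled type $B_n$: reduce each case to the already-classified Lagrangian situation by doubling the weight. Recall that in type $D_n$ we have $\Irr(\lambda)=\{\alpha_i+\alpha_j\}_{1\leq i<j\leq n}$, where $\alpha=\lambda+\rho$ and $\rho=(n-1,n-2,\ldots,0)$. The strict inequality $i<j$ here (as opposed to $i\leq j$) is the crucial structural difference from the Lagrangian case, and it is precisely what makes the large Grassmannians fail. First I would record the dimension $d=\dim\OGr(n,2n)=n(n-1)/2$ and the $\sL$-dominance condition $\alpha_1>\alpha_2>\cdots>\alpha_n$ (with $\alpha_n$ possibly negative, as in Remark~\ref{rem:d}).

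For the cases $n=2,3,4$ I would argue directly. Since $\Irr(\lambda)$ must equal $\{1,\ldots,d\}$ and its maximal element is $\alpha_1+\alpha_2$, while its minimal element is $\alpha_{n-1}+\alpha_n$, one gets $\alpha_1+\alpha_2=d$ and $\alpha_{n-1}+\alpha_n=1$. For $n=2$ (where $\OGr(2,4)\simeq\PP^1\sqcup\PP^1$, with $d=1$) and $n=3$ (where $\OGr(3,6)\simeq\PP^3$, with $d=3$) these are tiny, and I expect a short enumeration forces $\lambda=0$, i.e. the trivial bundle, after subtracting $\rho$. For $n=4$ we have $d=6$ and the six values $\{\alpha_i+\alpha_j\}_{1\leq i<j\leq 4}$ must be exactly $\{1,2,3,4,5,6\}$; solving the resulting linear system under the strict ordering $\alpha_1>\alpha_2>\alpha_3>\alpha_4$ should pin down $\alpha=(4,2,1,-1)$, whence $\lambda=\alpha-\rho=(1,0,0,-1)$, which (up to negating $\lambda_n$) is the dual tautological bundle $\CU^*$. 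This computation is finite but delicate, and I would present it as the verification step.

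For the general case $n\geq 5$, my plan is the doubling trick. Suppose $\CU^\lambda$ is Ulrich on $\OGr(n,2n)$, so $\{\alpha_i+\alpha_j\}_{1\leq i<j\leq n}=\{1,\ldots,d\}$ with $d=n(n-1)/2$. Set $\mu=2\lambda+2\rho_{D_n}-\rho_{C_n}$ so that $\alpha'=\mu+\rho_{C_n}=2\alpha$. Here the obstacle is that the Lagrangian invariant uses \emph{all} pairs $i\leq j$, whereas in type $D_n$ only the off-diagonal pairs appear; so the naive translation gives a set of size $\binom{n}{2}$, not $\binom{n+1}{2}=\dim\LGr(n,2n)$. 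Thus one cannot simply quote Proposition~\ref{prop:c:max} verbatim. Instead I would exploit the structure established in Proposition~\ref{prop:lg_tij}: the order-preserving map $T$ forces the diagonal values to follow the rigid pattern $\alpha_{ii}=4i-5$, and an Ulrich configuration for the full poset requires the reflection symmetry $\alpha_{n+1-i,n+1-i}=\alpha_{ii}$ of Lemma~\ref{lm:lg_tij}. The key point is that the half-sums $(\alpha_i+\alpha_j)/2$ for $i<j$ alone, when forced to fill a contiguous integer range without collision, inherit the same rigidity, and the diagonal terms $\alpha_i+\alpha_i$ that are \emph{missing} in type $D_n$ would have to be ``skipped'' slots in the range.

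The hard part, and the one I would spend the most care on, is showing that for $n\geq 5$ no configuration of off-diagonal sums can tile $\{1,\ldots,n(n-1)/2\}$ bijectively under the strict ordering. I expect the cleanest route is to reuse the collision argument from the proof of Proposition~\ref{prop:c:max}: there, the case $n=5$ already failed because $\alpha_3=(\alpha_2+\alpha_4)/2$ produced a forced coincidence among the $\alpha_{ij}$. The analogous statement here is that once $n\geq 5$, the rigid arithmetic-progression structure of the $\alpha_{ii}$ (from Proposition~\ref{prop:lg_tij}) combined with the reflection symmetry forces some $\alpha_i+\alpha_j=\alpha_p+\alpha_q$ with $\{i,j\}\neq\{p,q\}$, contradicting distinctness. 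Concretely, I would show $\alpha_2+\alpha_{n-1}=\alpha_1+\alpha_n$ (both equalling $d+1$ after the symmetry forces $\alpha_i+\alpha_{n+1-i}$ to be constant), which is the exact analogue of the $\alpha_{2,k-1}=\alpha_{1,k}$ collisions that killed Propositions~\ref{prop:igr} and~\ref{prop:d}. This gives the desired contradiction and completes the classification.
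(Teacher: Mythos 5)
Your reduction is the paper's: classify $\alpha_1>\cdots>\alpha_n$ in $\ZZ^n\cup(\thalf+\ZZ)^n$ with $\{\alpha_i+\alpha_j\}_{1\leq i<j\leq n}=\{1,\ldots,n(n-1)/2\}$, and your normalizations $\alpha_1+\alpha_2=d$, $\alpha_{n-1}+\alpha_n=1$ are correct. But both substantive steps after that break down. For $n=4$ your claimed solution $\alpha=(4,2,1,-1)$ is not a solution: it has the collision $\alpha_1+\alpha_4=3=\alpha_2+\alpha_3$ and produces the sum $\alpha_3+\alpha_4=0\notin\{1,\ldots,6\}$. The integral solution is $\alpha=(4,2,1,0)$, i.e.\ $\lambda=(1,0,0,0)=\CU^*$ on the nose. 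Moreover, the hedge ``up to negating $\lambda_n$'' is not available here: Remark~\ref{rem:d} is proved only for $k\leq n-2$, where $\lambda_n=\beta_{n-k}$ enters $\Irr(\lambda)$ through $\alpha_i\pm\beta_j$ symmetrically in the sign; in the maximal case negating $\alpha_n$ genuinely changes $\Irr(\lambda)$. Indeed, the half-integral $\alpha=\left(\tfrac{7}{2},\tfrac{5}{2},\tfrac{3}{2},-\tfrac{1}{2}\right)$ (that is $\lambda=\omega_3$, the triality twin of $\CU^*$) fills $\{1,\ldots,6\}$, while $\left(\tfrac{7}{2},\tfrac{5}{2},\tfrac{3}{2},\tfrac{1}{2}\right)$ does not --- so the $n=4$ enumeration is more delicate than your sketch suggests, and your answer to it is wrong.

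More seriously, the engine of your $n\geq 5$ argument does not exist. Proposition~\ref{prop:lg_tij} derives the rigidity $\alpha_{ii}=4i-5$ using the diagonal elements $a_{ii}$ of the poset $A$, which have no counterpart in type $D_n$; nothing can be ``inherited'', and the analogous rigidity for sums over $i<j$ only must be proved from scratch. That forcing is precisely the core of the paper's proof: reading the range $\{1,2,3,\ldots\}$ from the bottom forces $\alpha_n=0$, $\alpha_{n-1}=1$, $\alpha_{n-2}=2$, $\alpha_{n-3}=4$, $\alpha_{n-4}=7$, $\alpha_{n-5}=10$, whence the collision $\alpha_{n-3}+\alpha_{n-4}=11=\alpha_{n-1}+\alpha_{n-5}$ kills every $n\geq 6$. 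Your substitute --- that ``the symmetry forces $\alpha_i+\alpha_{n+1-i}$ to be constant $=d+1$'' --- has no justification and is false: Lemma~\ref{lm:lg_tij} asserts $\alpha_{n+1-i}=\alpha_{ii}$, i.e.\ that the entries of $\alpha$ read backwards equal the forced diagonal values $1,3,7,11,\ldots$ of the \emph{Lagrangian} problem; it is not an additive symmetry and is not a statement about type $D_n$ at all. Already the valid $n=4$ solution $(4,2,1,0)$ violates your claim, since $\alpha_1+\alpha_4=4\neq 3=\alpha_2+\alpha_3$, and no pairwise sum can ever equal $d+1$ because all sums lie in $\{1,\ldots,d\}$. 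Finally, even granting a collision mechanism, $n=5$ cannot be disposed of this way: there the forced $\alpha=(7,4,2,1,0)$ has all ten sums distinct, and the failure is of a different kind --- $\Irr(\lambda)=\{1,\ldots,9,11\}$ misses $d=10$ --- so it requires the separate check the paper performs, which your uniform ``$n\geq5$ by collision'' plan omits.
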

\begin{proof}
    We are basically interested in those $\alpha\in \ZZ^n\cup (\thalf+\ZZ)^n$ for which
    \(
        \alpha_1>\alpha_2>\cdots>\alpha_n,
    \)
    and
    \[
        \{\alpha_i+\alpha_j\}_{1\leq i<j\leq n}=\left\{1,\ldots,\tfrac{n(n-1)}{2}\right\}.
    \]
    An argument similar to the one used in type~$C_n$ shows that for such an~$\alpha$
    one necessarily has
    \[
        \alpha_n=0,\ \alpha_{n-1}=1,\ \alpha_{n-2}=2,\ \alpha_{n-3}=4,\ \alpha_{n-4}=7,\ \alpha_{n-5}=10.
    \]
    Then $\alpha_{n-3}+\alpha_{n-4}=\alpha_{n-1}+\alpha_{n-5}$, so $n\leq 5$. On the other
    hand, for $n=5$ the set $\Irr=\{1,\ldots,9,11\}$ does not contain $d$. It remains to check
    (say, by hand) that $\lambda=0$ for $n=2,3$ and $\lambda=(1,0,0,0)$ for $n=4$ indeed provide
    Ulrich bundles.
\end{proof}

\begin{remark}
    Of course, the negative results obtained in this paper do not contradict the original
    conjecture. In~\cite{ulrichFl} the authors show that most of the flag varieties in type~$A$
    do not carry an irreducible Ulrich bundle. On the other hand, according to our experience,
    it is very limiting to restrict oneself to the class of irreducible bundles. There seem
    to be equivariant bundles that are not irreducible, but still enjoy incredibly nice
    properties (see, e.g.,~\cite{fonarevKP}). It is also interesting to see if our results
    can help construct Ulrich bundles on other varieties, e.g. those K\"uchle varieties that
    come from sections of Lagrangian Grassmannians of planes~(see~\cite{kuznetsovKuh}).
\end{remark}

\printbibliography

\end{document}